\newcommand{\rationals}{\mathbb Q}
\newcommand{\proj}{\mathbb{P}}
\newcommand{\complex}{\mathbb C}
\theoremstyle{plain}
\numberwithin{equation}{section}
\newtheorem{theorem}{Theorem}[section]
\newtheorem*{theorem*}{Theorem}
\newtheorem{proposition}[theorem]{Proposition}
\newtheorem{lemma}[theorem]{Lemma}
\newtheorem{corollary}[theorem]{Corollary}
\theoremstyle{definition}
\newtheorem{question}[theorem]{Question}
\newtheorem{definition}[theorem]{Definition}
\newtheorem{remark}[theorem]{Remark}
\newtheorem{example}[theorem]{Example}
\newtheorem{assumption}[theorem]{Assumption}
\begin{document}
\title{Bounded negativity and Harbourne constants on ruled surfaces}
\author[Krishna Hanumanthu]{Krishna Hanumanthu}
\address{Chennai Mathematical Institute, H1 SIPCOT IT Park, Siruseri, Kelambakkam 603103, India}
\email{krishna@cmi.ac.in}

\author[Aditya Subramaniam]{Aditya Subramaniam}
\address{Chennai Mathematical Institute, H1 SIPCOT IT Park, Siruseri, Kelambakkam 603103, India}
\email{adityas@cmi.ac.in}

\subjclass[2010]{14C20, 14C17}
\thanks{The first author was partially supported by DST SERB MATRICS
  grant MTR/2017/000243. Both authors were partially supported by a grant from Infosys
  Foundation. }

\date{February 17, 2020}
\maketitle
\begin{abstract}
Let $X$ be a smooth projective surface and let $\mathcal{C}$ be an
arrangement of curves on $X$. The 
Harbourne constant of $\mathcal{C}$ was defined as a way to
investigate the occurrence of curves of negative self-intersection on blow ups of
$X$. This is related to the bounded negativity conjecture which
predicts that the self-intersection number of all reduced curves on a surface is
bounded below by a constant. We consider a 
geometrically ruled surface $X$ over a smooth curve and give lower
bounds for the Harbourne constants of
transversal arrangements of curves on $X$. We also define a global
Harbourne constant as the infimum of Harbourne constants for
arrangements of a specific type and give a lower bound for it. 
\end{abstract}

\section{Introduction}\label{intro}
Let $X$ be a smooth complex projective surface. $X$ is said to have
\textit{bounded negativity} if there exists an integer $b(X)$,
depending only on $X$, such that $C^2 \ge -b(X)$ for all reduced curves $C$ on $X$. The \textit{Bounded Negativity
  Conjecture (BNC)} asserts that every smooth complex projective surface has
bounded negativity. To verify BNC, it suffices to show that
self-intersection of reduced and irreducible
curves is bounded below, by \cite[Proposition 5.1]{B2}. 
While it is easy to prove this conjecture in some
cases (for example, when the anti-canonical divisor $-K_X$ is
effective, it follows from adjunction formula), it is open in
general. For example, the conjecture is open for surfaces obtained by
blowing up at least  ten points on the complex projective plane $\mathbb{P}_{\complex}^2$. 

The notion of \textit{Harbourne constants} was defined in \cite{B1} in an
attempt to understand and clarify the bounded negativity
conjecture. To illustrate the concept, consider the blow up $X$ of
$\proj_{\complex}^2$ at $r$ distinct points. It is clear that the occurrence of negative
curves on $X$ depends on the position of the points that are blown up. For example, if the
points are general enough, it is conjectured that $C^2 \ge -1$ for all
reduced and irreducible curves $C \subset X$. On the other hand, $C^2
= 1-r$ if the points are collinear and $C$ is the strict transform of
the line containing them. The key idea is to divide by $r$ and
consider the ratio $C^2/r$ for all reduced, not necessarily
irreducible, curves $C$ on $X$. The problem then is to bound these
ratios $C^2/r$. The infimum of these ratios as we vary the points on
$\proj^2$ and the reduced curves on blow ups of $\proj^2$ is an
invariant called the \textit{global Harbourne constant}
of $\proj^2$ 
and it is denoted by $H(\proj^2)$. It is not known if
$H(\proj^2) \ne -\infty$. But if $H(\proj^2) \ne -\infty$, then BNC
holds for a blow up of $\proj^2$ at any finite set of points.
One can similarly define the invariant $H(X)$ for any
surface $X$ and if $H(X) \ne -\infty$, then BNC holds for blow ups of
$X$ at finite sets of points; see \cite[Remark 2.3]{B1}.

In order to understand the global Harbourne constant $H(X)$ of a
surface $X$, it is natural to consider the following situation. Let
$\mathcal{C} = \{C_1,\ldots,C_d\}$ be an \textit{arrangement} of
irreducible and reduced curves on $X$. Let $D$ be the effective divisor
$C_1+\ldots+C_d$ on $X$. 
Let $\tilde{X} \to X$ be the blow up of $X$ at the singular points $p_1,
\ldots,p_r$ of $D$
and let $\tilde{D}$ be the strict transform of $D$. We are interested
in the ratio $\frac{\tilde{D}^2}{r}$. As we vary the arrangements
$\mathcal{C}$ on
$X$ and take the infimum of $\frac{\tilde{D}^2}{r}$, we obtain
$H(X)$. So it is natural to first try to bound 
$H(\mathcal{C}) = H(D) : = \frac{\tilde{D}^2}{r}$,
for a specific reduced curve $D$. 

This problem is studied in \cite{B1} when $X = \proj^2$ and all the
irreducible components of $D$ are lines. We say in this case that $\mathcal{C}$
is a \textit{line arrangement}. \cite[Theorem 3.3]{B1} proves that
$H(D) > -4$ for all such $D$. 

Harbourne constants for arrangements of $d$ lines in $\proj^2_k$ for
arbitrary fields $k$ are studied in \cite{DHS}. The \textit{absolute
 linear Harbourne constant} $H(d)$ is defined as the minimum of Harbourne
constants of $d$ lines in $\proj^2_k$ as $k$ varies over all fields. 
The value of $H(d)$ is computed for small values of $d$ and also
special forms of $d$. See \cite[Theorem 1.4, Theorem 1.6]{DHS}.

The case of arrangements of conics on $\proj^2$ was studied in
\cite{PT}. It is proved in \cite[Theorem A]{PT}  that $H(\mathcal{C}) \ge -4.5$ for any such arrangement
$\mathcal{C}$.

The author of \cite{R} considers arrangements $\mathcal{C}$ of
elliptic curves on an abelian surface or on $\proj^2$. 
It is proved
that $H(\mathcal{C}) \ge -4$. Further, in \cite[Theorem 5]{R}, 
a sequence of reduced curves $D_n \subset
\proj^2$ (each of which is a union of elliptic curves) is constructed
such that $\lim_n H(D_n) = -4$. 

In \cite{PRS}, the authors consider reduced divisors $D =
C_1+\ldots+C_d$ on $\proj^2$, where $C_i$ are smooth irreducible
plane curves of degree $n \ge 3$ such that $C_i$ and $C_j$ meet
transversally for all $i \ne j$. Assume also that $d \ge 4$ and that
there are no points in which all the curves meet. Let $s$ be the
number of singular points of $D$. Then they show in \cite[Theorem 4.2]{PRS} 
that $H(\mathcal{C}) \ge -4+ \frac{9nd-5n^2d}{2s} $.

Let $X$ be a smooth hypersurface of degree $d \ge 3$ in $\proj^3$. The
Harbourne constants for line arrangements on $X$ were first studied in
\cite{P}. The bounds obtained there were 
generalized in \cite{LP2}. By \cite[Theorem 3.2]{LP2},
the Harbourne constants of line arrangements $\mathcal{C}$ on $X$ satisfy $H(\mathcal{C}) \ge
-d(d-1)$ when $d \ge 4$. 

Harbourne constants for transversal arrangements of smooth curves on
a surface $X$ with numerically trivial canonical class were studied in \cite{LP1}. The
bounds on Harbourne constants were given in terms of 
the number of curves and the second Chern class of $X$. 
This bound
was generalized to surfaces with non-negative Kodaira dimension in
\cite{LP2}. 

As the above survey of the literature illustrates, most of the
work on Harbourne constants for curve arrangements considered surfaces
of non-negative Kodaira dimension or $\proj^2$. 
In this paper we look at curve arrangements on ruled surfaces and
prove lower bounds on their Harbourne constants. 

The basic tool in studying Harbourne constants for curve arrangements
on surfaces is a method developed by Hirzebruch in \cite{H1}. The idea 
is to consider a branched abelian covering $Z$ of $X$ branched along
the given configuration  $\mathcal{C}$. Then consider the
desingularization $Y$ of $Z$. 
Under some conditions on the surface $X$ and the arrangement
$\mathcal{C}$, $Y$ turns out 
to have non-negative Kodaira dimension. Then one considers 
Hirzebruch-Miyaoka-Sakai type inequalities 
involving the Chern numbers of
$Y$. Hirzebruch described the Chern numbers of $Y$ in terms of certain
invariants of the surface $X$ and certain combinatorial invariants of
the arrangement  $\mathcal{C}$. In the end, one obtains inequalities on
combinatorial invariants of  $\mathcal{C}$ which can then be used to obtain
bounds on Harbourne constants.

Hirzebruch \cite{H1} carried out this procedure for $X = \proj_{\complex}^2$ and for a line
arrangement  $\mathcal{C}$ on $X$ to compute the Chern numbers of
$Y$. In this case, he showed that 
\begin{eqnarray}\label{hirzebruch-type}
t_2 + \frac{3}{4}t_3 \ge d+\sum_{k \ge 5} (k-4)t_k, \text{~if~} t_d=t_{d-1}=0,
\end{eqnarray}
where $d$ is
the number of lines in  $\mathcal{C}$ and $t_i$ is the number of points where
exactly $i$ of the lines in $\mathcal{C}$ meet. Using this inequality crucially, the
authors of \cite{B1} obtain their lower bound on the Harbourne
constant of line arrangements in $\proj^2$ which is mentioned
above. In all the known results on Harbourne constants, a Hirzebruch-type
inequality is used to obtain a bound for the Harbourne
constants. 

An interesting question in this situation is to determine whether the surface $Y$
constructed by the method described above is a
\textit{ball quotient}. These are minimal surfaces of general type whose
universal cover is the 2-dimensional unit ball. Equivalently, they
are minimal surfaces of general type for which 
the Bogomolov-Miyaoka-Yau inequality is an equality. In other words, 
a minimal surface $Y$ is a ball quotient if and only if 
$K_Y$ is nef and big and $K_Y^2 = 3e(Y)$, where $K_Y$ is the canonical
divisor of $Y$ and $e(Y)$ is the topological Euler characteristic of
$Y$. In \cite{H1}, Hirzebruch was interested in constructing ball
quotients by starting with line arrangements on $\proj^2$. 
We show that the surfaces we construct starting with curve
arrangements on ruled surfaces do not give new examples of ball
quotients. We follow the methods developed in \cite{BHH}.


The paper is organized as follows. 

In Section \ref{prelims}, we recall some basic facts of ruled surfaces and
introduce the curve arrangements that we study. We also include some
well-known combinatorial properties of these curve arrangements that
we require. 

In Section \ref{abelian-cover}, using a result of Namba, we construct an abelian
cover $Z \to X$ branched on the given curve arrangement and then consider the 
desingularization $Y \to Z$; see Figure \ref{dia:diagram1}. We also compute the
Chern numbers of $Y$ and relate these to the combinatorial data of the
curve arrangement on $X$. 

In Section \ref{main-results}, 
we first show that $Y$ has non-negative Kodaira dimension which
enables us to apply a Hirzebruch-Miyaoka-Sakai type inequality. Using
this,  we
prove our main results Theorem \ref{Theorem 4.7} and Corollary
\ref{main-corollary} 
about Harbourne constants for curve arrangements
on ruled surfaces. Theorem \ref{Theorem 4.7} gives a lower bound for
Harbourne constants for a specific curve arrangement on a ruled
surface $X$. For a fixed pair of integers $a,b$, we define a \textit{global
Harbourne constant $H_{a,b}(X)$} which is obtained by taking the
infimum of Harbourne constants as the curve arrangements vary (see
Definition \ref{global-harbourne}).  In 
Corollary \ref{main-corollary}, we give lower bound for global
Harbourne constants on any ruled surface. 
Assuming that the curves in our arrangement do not
intersect the normalized section of the ruled surface, we obtain a
better bound for the Harbourne constant in Proposition \ref{Prop
  3.6}. Using these bounds, we give a lower bound in Corollary
\ref{Cor 4.12} for the self-intersection
of the strict transform of the curve arrangement  for the blow up of
all its singular points.

Finally, in Section \ref{ball-quotient}, we show that the surface $Y$ is not a ball
quotient (see Theorem \ref{Theorem 5.2}). 

We work throughout over the complex number field $\complex$.

\section{Preliminaries}\label{prelims} 
\begin{definition}[Transversal arrangement]
Let $\mathcal{C}=\{C_1,C_2, \ldots ,C_d\}$ be an arrangement of curves
on a smooth projective surface $X$.
We say that $\mathcal{C}$ is a \emph{transversal arrangement}
if $d\geq 2$, all  curves $C_i$ are smooth
and they intersect pairwise transversally.
\end{definition}
Given a transversal arrangement $\mathcal{C}=\{C_1,C_2,\ldots, C_d\}$,
we have a divisor $D=\sum\limits_{i=1}^{d}C_i$ on $X$. We use the arrangement
$\mathcal{C}$ and the divisor $D$ interchangeably. 

Let $\text{Sing}(\mathcal{C})$ be the set of all intersection points
of the curves in a transversal arrangement $\mathcal{C}$. Note that $\text{Sing}(\mathcal{C})$ is
precisely the set of singularities of the reduced curve $D$, since all
the irreducible components of $D$ are nonsingular by hypothesis. 
Let $s$ denote the number of points in the set $\text{Sing}(\mathcal{C})$.

\begin{definition}[Combinatorial invariants of transversal arrangements]
Let	$\mathcal{C}$ be a transversal arrangement on a smooth surface
$X$. For a point $p\in X,$
let $r_p$ denote the  number of elements of $\mathcal{C}$ that pass
through $p$. We call $r_p$ the \textit{multiplicity} of $p$ in
$\mathcal{C}.$ We say $p$ is a $k$-fold point of $\mathcal{C}$ 
if there are exactly $k$ curves in $\mathcal{C}$ passing through $p.$
For a positive integer $k\geq2$, $t_k$ denotes the number of $k$-fold points in $\mathcal{C}$.
\end{definition}
 These numbers satisfy the following standard equality, which follows by
counting incidences in a transversal arrangement in two ways:
\begin{equation}\label{eq:combinatorial general}
\sum_{i<j}(C_i\cdot C_j)=\sum_{k\geq 2}\binom{k}{2}t_k.
\end{equation}
Also, let $$f_i=f_i(D) :=\sum_{k\geq 2}k^i t_k.$$
In particular, $f_0=s$ is the number of points in
$\text{Sing}(\mathcal{C})$. 

\begin{definition}[Harbourne constants of a transversal arrangement]\label{def:H-constant TA}
	Let $X$ be a smooth projective surface.
	Let $D=\sum\limits_{i=1}^d C_i$ be a transversal arrangement
        of curves on $X$ with $s = s(D) > 0$.
	The rational number
	\begin{equation*}\label{eq:TA Harbourne constant}
H(X,\mathcal{C})=	H(X,D)=\frac1s\left(D^2-\sum\limits_{P\in \text{Sing}(D)}r_P^2\right)
	\end{equation*}
	is called the \emph{Harbourne constant of the transversal arrangement} $\mathcal{C}$.
\end{definition}

When the surface $X$ is clear from the context, we simply write
$H(\mathcal{C})$ or $H(D)$ to denote the Harbourne constants. 

In this paper, we consider transversal arrangements of curves on ruled surfaces. We follow the
notation in \cite[Chapter V, Section 2]{Har}.   


Let $C$ be a smooth complex curve of genus $g$. A
\textit{geometrically ruled surface} is a
surface of the form $X = \proj_C(E)$ where $E$ is a rank 2 vector
bundle on $C$. We refer to such surfaces simply as \textit{ruled}
surfaces. Let $\phi: X \to C$ be the natural map.

Note that $\proj_C(E) \cong \proj_C(E\otimes \mathcal{L})$ for any line
bundle $\mathcal{L}$ on $C$. Let $E$ be a \textit{normalized} vector
bundle with $X = \proj_C(E)$; this means that 
$H^0(C, E) \neq 0$ and $H^0(C,
E\otimes\mathcal{L} )=0$  for all line bundles $\mathcal{L}$ on $C$
with $\text{deg}(\mathcal{L}) < 0.$
We set $e :=\text{deg}(\wedge^2E)$.  This invariant is uniquely determined by
$X$. 

We fix a
section $C_0$ of $X$ with $\mathcal{L}(C_0)=\mathcal{O}_{\mathbb
  P(E)}(1).$ Let $f$ denote the numerical class of a fiber of $\phi$. 
Then any element of $\text{Num}(X)$ has the form $aC_{0}+bf$ for
$a,b\in\mathbb{Z}$. The intersection product on  $\text{Num}(X)$ is
determined by  $C_0^2 = -e$,\ $C_0\cdot f=1$ and $f^2 =0.$
Any canonical divisor on $X$, denoted by $K_X$, is numerically
equivalent to  $-2C_{0}+(2g-2-e)f.$

Let $X$ be a ruled surface over a smooth complex curve $C$ of genus
$g$ with $e\geq 0.$ 
If an irreducible curve on $X$, different from $C_0$ and $f$, is numerically equivalent to
$aC_{0}+bf$, then $a > 0$ 
and $b\geq ae.$ A divisor on $X$ which is 
numerically equivalent to $aC_{0}+bf $ is ample 
if and only if  $a > 0$ and $b > ae.$

For more details, see \cite[Chapter V, Section 2]{Har}.\\
\begin{assumption}\label{star}
Let $X$ be a ruled surface over a smooth curve of genus $g \ge 0$ with
invariant $e = e(X) \ge
4$. 
Let 
$\mathcal{C}=\{C_1,C_2,\ldots,C_d\}$ be a transversal arrangement of curves on $X$
with $d\geq4$ and
$t_d=0.$  Suppose that all the curves $C_i$ in $\mathcal{C}$ are linearly equivalent to a
fixed divisor $A$ on $X,$ where $A$ is numerically equivalent to
$aC_{0}+bf,$  for $a,b\in\mathbb{Z}$ with $a>0$ and $b\ge ae.$
Note that under these assumptions, $C_i\cdot C_j=2ab-a^2e$ for all
curves $C_i,C_j \in \mathcal{C}$.  
\end{assumption}

\begin{lemma}\label{eq:combinatorial equality1}
Let $\mathcal{C}=\{C_1,C_2,\ldots,C_d\}$ be a transversal arrangement
of curves on a ruled surface $X$ 
satisfying Assumption
\ref{star}. Then we have the following. 
\begin{enumerate}
\item For every curve $C_i \in\mathcal{C},$ we have $\sum_{p\in C_i}(r_{p}-1)=(2ab-a^2e)(d-1).$
\item $f_2-f_1 = \sum_{k\geq2}k(k-1)t_{k} = (2ab-a^2e)d(d-1).$
\end{enumerate}
\end{lemma}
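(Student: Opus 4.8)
The plan is to prove both parts by double counting incidences, using the transversality hypothesis and the fact that all curves in $\mathcal{C}$ are linearly equivalent to $A$ with $A^2 = 2ab - a^2e$.

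For part (1), I would fix a curve $C_i \in \mathcal{C}$ and count, with multiplicity, the intersection points of $C_i$ with the remaining $d-1$ curves of the arrangement. On the one hand, since $C_i$ and $C_j$ meet transversally and $C_i \cdot C_j = C_i \cdot A = 2ab - a^2e$ for every $j \ne i$, the total number of intersection points counted this way (i.e., $\sum_{j \ne i} \#(C_i \cap C_j)$) equals $(2ab-a^2e)(d-1)$. On the other hand, a point $p \in C_i$ that is a $k$-fold point of $\mathcal{C}$ lies on exactly $r_p = k$ of the curves, hence on $r_p - 1$ of the curves other than $C_i$, so it gets counted $r_p - 1$ times in that sum. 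Points $p \in C_i$ with $r_p = 1$ contribute nothing. Equating the two counts gives $\sum_{p \in C_i}(r_p - 1) = (2ab-a^2e)(d-1)$, which is the claim. One small point to be careful about here is that transversality is exactly what guarantees each intersection point is counted once per curve through it with no excess local intersection multiplicity; I would note this explicitly.

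For part (2), I would sum the identity from part (1) over all $i$ from $1$ to $d$. The right-hand side becomes $(2ab-a^2e)d(d-1)$. For the left-hand side, I interchange the order of summation: $\sum_{i=1}^d \sum_{p \in C_i}(r_p-1) = \sum_{p \in \mathrm{Sing}(\mathcal{C})} \sum_{i : p \in C_i}(r_p - 1) = \sum_{p} r_p(r_p-1)$, since there are exactly $r_p$ curves through $p$ and each contributes $r_p - 1$. Grouping points by their multiplicity $k$, this is $\sum_{k \ge 2} k(k-1)t_k = f_2 - f_1$. This establishes $f_2 - f_1 = (2ab - a^2e)d(d-1)$.

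I do not expect a genuine obstacle here; the result is elementary combinatorics once Assumption \ref{star} is in force. The only thing requiring a little care is bookkeeping: making sure that in part (1) the sum $\sum_{p \in C_i}$ ranges over all points of $C_i$ (with the understanding that only points of $\mathrm{Sing}(\mathcal{C})$ contribute), and that in part (2) the double sum is legitimately reindexed over $\mathrm{Sing}(\mathcal{C})$. Alternatively, part (2) can be derived directly from the standard identity \eqref{eq:combinatorial general}: since $C_i \cdot C_j = 2ab - a^2e$ for all $i < j$, the left side of \eqref{eq:combinatorial general} is $\binom{d}{2}(2ab-a^2e)$, while the right side is $\sum_{k\ge 2}\binom{k}{2}t_k = \tfrac12(f_2 - f_1)$; multiplying by $2$ gives the result. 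I would likely present part (1) via double counting and then note that part (2) follows either by summing (1) over $i$ or directly from \eqref{eq:combinatorial general}.
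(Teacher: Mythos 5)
Your proof is correct and follows essentially the same route as the paper: part (1) by double counting the transversal intersections of $C_i$ with the other $d-1$ curves, and part (2) by summing (1) over $i$ and reindexing to obtain $\sum_{k\ge 2}k(k-1)t_k$. The alternative derivation of (2) from \eqref{eq:combinatorial general} is a valid minor variant but adds nothing beyond the paper's argument.
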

\begin{proof}
First we prove (1). Given a multiple point $p \in C_i, $ $r_{p}-1$ is
the number of curves of the arrangement passing through $p$ different
from $C_i$. As every curve meets 
every other curve in $2ab-a^2e$ distinct points, the expression 
$\sum_{p\in C_i}(r_{p}-1)$ counts all curves of the arrangement
different from $C_i$, $2ab-a^2e$ times each.
So (1) holds. 

The first equality in (2) follows from the definition of $f_2,
f_1$. As $\sum_{C_i \in\mathcal{C}}\sum_{p\in C_i } (r_{p}-1)=\sum_{k\geq2}k(k-1)t_k,$
the second equality  in (2) follows from (1).
\end{proof}

\section{Construction of the abelian cover}\label{abelian-cover}

Our arguments follow the model developed by Hirzebruch in \cite{H1}. These ideas
have been used by several recent authors. 
See \cite{E, P,PRS, PT,R}, for example. 

Let $X$ be a ruled surface over a smooth curve $C$ of genus $g$. Let
$\mathcal{C} = \{C_1,\ldots, C_d\}$ be a transversal arrangement of curves on $X$ satisfying Assumption
\ref{star}. Our goal is to give bounds for the Harbourne constant
$H(X,\mathcal{C})$. 
The starting point is to consider a branched covering of $X$ branched
along the curves in $\mathcal{C}$. 
In order to prove that such a branched covering does in fact exist
 for the ruled surface $X$, 
we use a result of Namba, which we recall below.

 As above, let $D=\sum_{i=1}^d C_{i}$. Let $\text{Div}(X,D)$ be the subgroup
 of the $\mathbb{Q}$-divisors on $X$ generated by all the integral
 divisors and the following $\rationals$-divisors: $\frac{C_1}{2},
 \frac{C_2}{2}, \ldots, \frac{C_d}{2}$.
 

 Let $\sim$ be linear equivalence in $\text{Div}(X,D)$, where $G\sim G'$
 if and only if $G-G'$ is an integral principal divisor. Let 
$\text{Div}^{0}(X,D)/\sim$ denote the kernel of the
first Chern class map:
 \[
 \begin{array}{ccc}
 \text{Div}(X,D)/\sim & \rightarrow & H^{1,1}(X,\mathbb{R})\\
 G & \mapsto & c_{1}(G)
 \end{array}
 \]
 
We use the following result of Namba \cite[Theorem 2.3.20]{N}. In our
special case, it says the following. 

 \begin{theorem}[Namba]
 	\label{thm:(Namba).-There-exists}
 	There exists a finite abelian cover $Z \to X$ with branch
        locus equal to $D$ and ramification index 2 at each $C_i$ 
        if and only if for every
 	$j=1,\dots,d$, there exists an element of finite order $v_{j}=\sum\frac{a_{ij}}{2}C_{i}+E_{j}$
 	of $Div^{0}(X,D)/\sim$, where $E_{j}$ are integral divisors
        and $a_{jj}\in \mathbb{Z}$ is odd for every $j = 1,\dots,d$. 

 	In this case, the subgroup of $Div^{0}(X,D)/\sim$ generated by the $v_{j}$
 	is isomorphic to the Galois group of the abelian cover $Z \to X$.
 \end{theorem}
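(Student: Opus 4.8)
The statement is the ramification-index-two case of the general classification of finite abelian branched covers, so the plan is to set up a two-way dictionary between such covers and finite-order classes in $\text{Div}^0(X,D)/\sim$, proving each implication separately and then matching the Galois group. The two technical ingredients are the eigensheaf decomposition of an abelian cover and the Kummer-type cyclic cover attached to a torsion $\mathbb{Q}$-divisor class; I would develop both and then read off the ramification data from the fractional parts of the coefficients.

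For the ``if'' direction, suppose the classes $v_1,\dots,v_d$ are given, and let $H\le \text{Div}^0(X,D)/\sim$ be the subgroup they generate, which is finite since each $v_j$ has finite order. I would first attach a cyclic cover to a single torsion class $v=\sum\frac{a_i}{2}C_i+E$ of order $m$: writing $mv=\text{div}(\phi)$ for a rational function $\phi$ (this is exactly the content of ``finite order in $\text{Div}^0/\sim$''), form the degree-$m$ cover obtained by adjoining an $m$th root of $\phi$ and normalizing, concretely $\text{Spec}_X\bigl(\bigoplus_{k=0}^{m-1}\mathcal{O}_X(\lfloor -kv\rfloor)\bigr)$ with multiplication induced by $\phi$. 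The branch locus and ramification are governed by the fractional parts of the coefficients of $v$: a coefficient $a_i/2$ with $a_i$ odd gives fractional part $1/2$, hence ramification index exactly $2$ along $C_i$, while an even $a_i$ or the integral part $E$ contributes no ramification. The desired cover $Z$ is then the normalization of $X$ in the compositum of the function fields of a generating set of these cyclic covers (equivalently, the normalization of their fibre product). Since for each $j$ the generator $v_j$ has $a_{jj}$ odd, the inertia along every $C_j$ is nontrivial, and because all denominators are $2$ this inertia is $\mathbb{Z}/2$; thus $Z\to X$ branches precisely along $D$ with ramification index $2$ at each $C_i$.

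For the ``only if'' direction, suppose $\pi\colon Z\to X$ is a finite abelian cover with Galois group $G$, branch locus $D$, and ramification index $2$ along each $C_i$. I would decompose $\pi_*\mathcal{O}_Z=\bigoplus_{\chi\in G^*}L_\chi^{-1}$ into character eigensheaves, where $G^*=\text{Hom}(G,\complex^*)$ and each $L_\chi$ is a line bundle on $X$. The inertia group along $C_i$ is a subgroup of $G$ of order $2$, and pairing it with a character $\chi$ records a local contribution $a_i(\chi)/2$ with $a_i(\chi)\in\{0,1\}$; the fundamental relations of abelian cover theory then say precisely that $c_1(L_\chi)$ is accounted for by these fractional contributions, so that $v_\chi:=\sum_i\frac{a_i(\chi)}{2}C_i+E_\chi$ is a well-defined class of finite order (equal to the order of $\chi$) in $\text{Div}^0(X,D)/\sim$ for a suitable integral divisor $E_\chi$. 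For each $j$, since the ramification index along $C_j$ is $2$ there is a character $\chi$ whose inertia pairing along $C_j$ is nontrivial, and the corresponding $v_\chi$ has $a_{jj}$ odd; taking $v_j:=v_\chi$ furnishes the required classes.

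Finally, to identify the Galois group I would verify that $\chi\mapsto v_\chi$ is an injective homomorphism $G^*\to\text{Div}^0(X,D)/\sim$ whose image is generated by the $v_j$; dualizing the resulting isomorphism $G^*\cong\langle v_1,\dots,v_d\rangle$ and using $G\cong G^*$ for finite abelian $G$ gives the stated isomorphism with the Galois group. The main obstacle is the bookkeeping in the ``only if'' direction: one must check that the local inertia characters along the various $C_i$ glue to a genuine element of $\text{Div}^0(X,D)/\sim$ (this is the force of the fundamental relations), that the resulting class is torsion rather than merely numerically trivial, and that finite order of the class corresponds to \emph{algebraicity} of the cover rather than an a priori only analytic one. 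It is precisely the torsion condition---equivalent to $mv$ being an integral principal divisor, hence of the form $\text{div}(\phi)$---that makes the Kummer construction of the ``if'' direction available and reversible, thereby closing the equivalence.
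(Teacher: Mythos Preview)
The paper does not prove this statement at all: Theorem~\ref{thm:(Namba).-There-exists} is quoted from Namba's book \cite[Theorem 2.3.20]{N} as a black box (``We use the following result of Namba\ldots'') and is immediately applied by writing down explicit $v_j$'s. So there is no proof in the paper against which to compare your proposal.

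That said, your sketch is the standard route one would take to reprove Namba's theorem, essentially the Pardini formalism \cite{Pa} specialized to inertia groups of order $2$. A few places would need tightening if you were writing this out in full. In the ``if'' direction you should verify that the branch locus of the compositum is \emph{exactly} $D$ and not a proper subset: this is where the hypothesis that $a_{jj}$ is odd for every $j$ is used, guaranteeing nontrivial inertia along each $C_j$. In the ``only if'' direction, the assertion that the order of $v_\chi$ equals the order of $\chi$ is not automatic from the eigensheaf relations alone and requires the multiplicative structure $L_\chi\otimes L_{\chi'}\cong L_{\chi\chi'}(\text{integral correction})$; similarly, injectivity of $\chi\mapsto v_\chi$ needs the cover to be connected (equivalently, the characters to generate $G^*$), which you implicitly assume. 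Finally, your remark about algebraicity versus analyticity is a red herring here: once $mv$ is an integral principal divisor the cyclic cover is algebraic by construction, and GAGA handles the rest on a projective $X$.
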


Set $v_1=v_2=\frac{C_1-C_2}{2}$ and $v_{j}=\frac{C_1-C_{j}}{2}$ for
$j=3,\dots,d$ and $E_j=0$ for every $j$. Then, by Theorem \ref{thm:(Namba).-There-exists}, 
there exists an
abelian cover $\pi:Z\to X$ ramified over $\mathcal{C}$ with
ramification index $2$. The Galois group $G$ of $\pi$ is
generated by $v_1=v_2, v_3,\ldots,v_{d}$ and no proper subset of
$\{v_2,\ldots,v_d\}$ generates $G$. Note that 
every element of $G$ has order 2. So 
the Galois group of $\pi$ is
$(\mathbb{Z}/2\mathbb{Z})^{d-1}$. 
We denote by $\rho:Y\to Z$ the minimal desingularization of $Z$.

For a singular point $p$ of $\mathcal{C},$ recall that $r_p$ denotes
its multiplicity. Let $\tau: \widetilde{X} \to X$ be the blow up of $X$ at the $f_{0}-t_{2}=\sum_{k\geq3}t_{k}$
singular points of $\mathcal{C}$ with multiplicities $k\geq3$. Let
$\widetilde{D}=\sum_{i=1}^d \widetilde{C_i}$ be 
the strict transform of $D$ in $\widetilde{X}$ and let $E_p:= \tau^{-1}(p)$ be the exceptional divisor over the point $p$.

Note that the singular locus of $Z$ is precisely the pre-image, under
$\pi$, of the singular points of $\mathcal{C}$ of multiplicity at
least 3 (see \cite[Proposition 3.1]{Pa}, for example). 
Since $\tau$ is defined to be the blow up of 
the singular points of $\mathcal{C}$ of multiplicity at
least 3, there exists a morphism
$\sigma:Y \to \widetilde{X}$, by the universal property of blow ups.  
See the commutative diagram in Figure 
\ref{dia:diagram1}. 

From the commutativity of the diagram, it is easy to see that $\sigma$ is also
an abelian cover with Galois group
$(\mathbb{Z}/2\mathbb{Z})^{d-1}$, branch divisor $\widetilde{D}$
and ramification index 2 at every irreducible component of $\widetilde{D}$. Then 
$\sigma^{\star}E_p$ is a divisor in $Y$ consisting of
$2^{d-1-r_p}$ disjoint curves $F_p$, 
each with multiplicity 2. See \cite[II.3.2]{Hi} for more details. 
For a point 
$x \in E_p$ which is not in the branch locus of $\sigma$, $\sigma^{-1}(x)$ consists of
$2^{d-1}$ distinct points and these are contained in the $2^{d-1-r_p}$
disjoint curves $F_p$. Since each $F_p$ occurs with multiplicity 2 in
$\sigma^{\star}E_p$, the number of elements in a single $F_p$ that map to $x$ is
$\frac{2^{d-1}}{2(2^{d-1-r_p})} = 2^{r_p-1}.$ So each $F_p$ is a
finite cover of $E_p$ of degree $2^{r_p-1}$.  The branch locus of the
map $F_p \to E_p$ is precisely the $r_p$ intersection points of 
$E_p$ and $\widetilde{D}$. Since the ramification index is 2 and the
degree of the map $F_p \to E_p$ is $2^{r_p-1}$, there are
$\frac{2^{r_p-1}}{2} = 2^{r_p-2}$ points in $F_p$ that map to any
point in the branch locus. Hence the degree of the ramification
divisor is $2^{r_p-2}r_p$.



\begin{figure}[h] 
\[ \xymatrix{
   Y \ar[r]^{\rho}\ar[d]_{\sigma} & Z\ar[d]^{\pi}\\
  \widetilde{X}  \ar[r]^{\tau}       & D
 } \]
\caption{Construction of the surface $Y$}\label{dia:diagram1}
\end{figure}


By the above discussion, we have $\sigma^{\star}E_p = \sum 2 F_p$ with
$2^{d-1-r_p}$ terms in the summand. So
$$-2^{d-1} = 2^{d-1}(E_p)^2= (\sigma^{\star}E_p)^2=4(2^{d-1-r_p})F_p^2,$$
which implies that $F_p^2=-2^{r_p-2}$ for every point $p \in
\text{Sing}(\mathcal{C})$ with $r_p \ge 3$.

Using the Hurwitz formula to compute the Euler characteristic of
$F_p$, we get 
\begin{equation}\label{eq:2-2g}
e(F_p) = 2-2g(F_p)=2^{r_p-1}(2) - 2^{r_p-2}r_p = 2^{r_p-2}(4-r_p). 
\end{equation}

 We will calculate the Chern numbers $c_2$,\ $c_1^2$ of $Y$, where
 $c_2$ is same as the Euler characteristic $e(Y)$ of $Y$ and $c_1^2$ is the self-intersection number of a canonical divisor of $Y$.

Note that 
\begin{align*} 
Y\setminus \bigcup\limits_{p,r_p\geq3}\sigma^{-1}E_{p}=(\tau\circ \sigma)^{-1}\left((X\setminus\mathcal{C})\cup(\mathcal{C}\setminus\text{Sing}(\mathcal{C}))\cup\{p\in \text{Sing}(\mathcal{C})| r_p= 2 \}\right).
\end{align*}
 
If $A \to B$ is an \'etale map of degree $n$, then
$e(A)=ne(B)$. Since $\sigma$ is an \'etale map on 
$Y\setminus \bigcup\limits_{p,r_p\geq3}\sigma^{-1}E_{p}$,
 we get
\begin{equation}\label{eq:e1}
e\left(Y\setminus \bigcup\limits_{p,r_p\geq3}\sigma^{-1}E_{p}\right)=2^{d-1}e(X\setminus\mathcal{C})+
2^{d-2}e(\mathcal{C}\setminus\text{Sing}(\mathcal{C}))+2^{d-3}t_{2}.
\end{equation}
Using the additivity of the topological 
Euler characteristic, we have the following: 
\begin{enumerate}
\item[] $e(\mathcal{C})=2\sum(1-g(C_i))- \sum_{k\geq2}(k-1)t_{k},$
\item[]
\item[]
  $e(\mathcal{C}\setminus\text{Sing}(\mathcal{C}))=2\sum(1-g(C_i))-\sum_{k\geq2}kt_{k},$
\item[]
\item[] $e(X\setminus\mathcal{C})=e(X)+2\sum(g(C_i)-1)+
  \sum_{k\geq2}(k-1)t_{k}.$
\end{enumerate}
Substituting these values in \eqref{eq:e1}, we have 
\begin{align*}
e\left(Y\setminus
  \bigcup\limits_{p,r_p\geq3}\sigma^{-1}E_{p}\right)=~&2^{d-1}\left(e(X)+2\sum(g(C_i)-1)+
                                                      \sum_{k\geq2}(k-1)t_{k}\right)+\\
~&2^{d-2}\left(-2\sum(g(C_i)-1) -\sum_{k\geq2}kt_{k}\right)+2^{d-3}t_{2}.
\end{align*}
It is easy to check that $$e(X)=4-4g \text{~and~} 2g(C_{i})-2
=-a^2e+2ab+ae+a(2g-2)-2b.$$
Note also that $\sum_{k\geq2}(k-1)t_{k}=f_1-f_0$.

So we get 
\begin{align*}
e\left(Y\setminus \bigcup\limits_{p,r_p\geq3}\sigma^{-1}E_{p}\right)= ~&2^{d-1}\left(4-4g+d(-a^2e+2ab+ae+a(2g-2)-2b)+f_1-f_0\right)+\\
~ &2^{d-2}\left(-d(-a^2e+2ab+ae+a(2g-2)-2b)-f_1\right)+2^{d-3}t_{2}.
\end{align*}
There are $2^{d-1-r_{p}}$ curves with Euler characteristic $e(F_{p})$ in $Y$
over each exceptional divisor $E_p$ in $\widetilde{X}$. So
\eqref{eq:2-2g} gives
\begin{align*}
e(Y)&=e\left(Y\setminus \bigcup\limits_{p,r_p\geq3}\sigma^{-1}E_{p}\right)+\sum_{k\geq3}2^{d-1-k}t_{k}e(F_p)\\
&=e\left(Y\setminus \bigcup\limits_{p,r_p\geq3}\sigma^{-1}E_{p}\right)+\sum_{k\geq3}2^{d-1-k}t_{k}\left(2^{k-1}(2-k)+k2^{k-2}\right)\\
&=e\left(Y\setminus \bigcup\limits_{p,r_p\geq3}\sigma^{-1}E_{p}\right)+2^{d-3}\sum_{k\geq3}t_{k}(4-k)\\
&=e\left(Y\setminus \bigcup\limits_{p,r_p\geq3}\sigma^{-1}E_{p}\right)+2^{d-3}(4f_0-f_1-2t_2)\\
\end{align*}
Now using the value of $e\left(Y\setminus \bigcup\limits_{p,r_p\geq3}\sigma^{-1}E_{p}\right)$ computed above and  simplifying, we get
\begin{equation}\label{eq:e(Y)}
\frac{1}{2^{d-3}} e(Y)
= 16-16g+d(-2a^2e+4ab+2ae+4ag-4a-4b)+ f_{1}-t_{2}.
\end{equation}

Next we calculate $c_1^2(Y).$
 
For the divisor $D=\sum_{i=1}^{d}C_i$ on $X$, we know that $\tau^{\star}D -
 \sum\limits_{\substack{p\in \text{Sing}(\mathcal{C}),\\
     r_p\geq3}}r_pE_p$ is the strict
 transform of $D$ in $\widetilde{X}.$  The divisors $\sigma^{\star}(\tau^{\star}D
 - \sum r_pE_p)$ and $\sigma^{\star}E_p$ of $Y (p\in
 \text{Sing}(\mathcal{C}),r_p\geq3)$ are divisible by $2$. For a
 canonical divisor $K_X$ of $X$, $\tau^{\star}K_X+\sum E_p$ is a canonical
 divisor of $\widetilde{X}.$ Applying 
\cite[Page 42, Lemma 17.1]{Bar} to the ramified covering $ \sigma:Y \to \widetilde{X},$ we get the following:
 
 \begin{lemma}\label{Lemma 3.2} Let $Y$ be the surface constructed in
   Figure \ref{dia:diagram1}. 
 	The canonical divisor of $Y$ is given by $K_{Y}=\sigma^{\star}T$ for the $\rationals$-divisor $T$ on $\widetilde{X}$ defined as
 	$$T:=\tau^{\star}K_{X}+\sum E_{p}+\frac{1}{2}\left(\sum E_{p}+\tau^{*}D-\sum r_{p}E_{p}\right),$$
 	where the summations are taken over all the points $p \in
        \text{Sing}(\mathcal{C})$ such that $r_p\geq3$.
\end{lemma}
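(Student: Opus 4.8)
The plan is to read off $K_Y$ from Riemann--Hurwitz applied to the finite morphism $\sigma\colon Y\to\widetilde X$ of smooth projective surfaces, and then rewrite the resulting $\rationals$-divisor on $\widetilde X$ using the two standard blow-up identities $K_{\widetilde X}=\tau^{\star}K_X+\sum E_p$ and $\widetilde D=\tau^{\star}D-\sum r_pE_p$, where both sums are over the points $p\in\text{Sing}(\mathcal C)$ with $r_p\ge 3$. Packaging this computation is exactly the role of \cite[Page 42, Lemma 17.1]{Bar}; the paragraphs below indicate why it produces the divisor $T$ in the statement.

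Since we work over $\complex$, the finite morphism $\sigma$ satisfies $K_Y=\sigma^{\star}K_{\widetilde X}+R_\sigma$, where $R_\sigma=\sum_{\Gamma}(e_\Gamma-1)\,\Gamma$ is the ramification divisor, the sum ranging over prime divisors $\Gamma\subset Y$ with ramification index $e_\Gamma$ (the ramification is tame, since we are in characteristic $0$, so the contribution of $\Gamma$ is exactly $e_\Gamma-1$). By the construction of $\sigma$ recalled just before the lemma, $\sigma$ is an abelian cover ramified with index $2$ along each component of $\sigma^{-1}(\widetilde C_i)$; as $\sigma^{\star}\widetilde C_i$ equals twice its reduced structure, these components contribute $\tfrac12\,\sigma^{\star}\widetilde C_i$ to $R_\sigma$, and summing over $i$ gives $\tfrac12\,\sigma^{\star}\widetilde D$. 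In addition, over each blown-up point $p$ the exceptional curve $E_p$ lies in the branch locus of $\sigma$: indeed $\sigma^{\star}E_p=2\sum F_p$, so $\sigma$ is ramified with index $2$ along every $F_p$, and these contribute $\tfrac12\,\sigma^{\star}E_p$, i.e.\ $\tfrac12\,\sigma^{\star}\!\big(\sum E_p\big)$ in total. Therefore $R_\sigma=\sigma^{\star}\big(\tfrac12(\widetilde D+\sum E_p)\big)$, so $K_Y=\sigma^{\star}\big(K_{\widetilde X}+\tfrac12\widetilde D+\tfrac12\sum E_p\big)$, and substituting $K_{\widetilde X}=\tau^{\star}K_X+\sum E_p$ together with $\widetilde D=\tau^{\star}D-\sum r_pE_p$ yields precisely $K_Y=\sigma^{\star}T$ with $T=\tau^{\star}K_X+\sum E_p+\tfrac12\big(\sum E_p+\tau^{\star}D-\sum r_pE_p\big)$, as claimed.

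The only point that is not pure bookkeeping is the extra summand $\tfrac12\sum E_p$ in $T$: it does not come from the branch divisor $D$ of the original cover $\pi\colon Z\to X$, but is created by blowing up the points of multiplicity $\ge 3$, and one has to be sure the associated ramification index is exactly $2$ --- equivalently, that each $F_p$ appears in $\sigma^{\star}E_p$ with multiplicity $2$ rather than $1$. That is the content of the identity $\sigma^{\star}E_p=2\sum F_p$ recorded in the excerpt (and it is from this that $F_p^2=-2^{r_p-2}$ was deduced), so this is the place where \cite[Lemma 17.1]{Bar} does genuine work; everything else is pulling back $\rationals$-divisors along $\sigma$ and $\tau$.
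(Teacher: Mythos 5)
Your argument is correct and is essentially the paper's: the paper simply applies \cite[Page 42, Lemma 17.1]{Bar} to the cover $\sigma\colon Y\to\widetilde X$, whose branch components ($\widetilde C_i$ and the $E_p$ with $r_p\ge 3$) all have ramification index $2$, and that lemma is exactly the Riemann--Hurwitz/ramification-divisor computation $K_Y=\sigma^{\star}\bigl(K_{\widetilde X}+\tfrac12\widetilde D+\tfrac12\sum E_p\bigr)$ that you carry out by hand. You also correctly isolate the one non-formal input, namely that $\sigma^{\star}E_p=2\sum F_p$ (so the $E_p$ genuinely lie in the branch locus with index $2$), which the paper establishes just before the lemma.
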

 Thus, 
$T^2=K_{X}^2+K_{X}\cdot
D-\sum_{k\geq3}t_{k}+\sum_{k\geq3}(k-1)t_{k}+\frac{1}{4}(D^2-\sum_{k\geq3}(k-1)^2t_{k}).$

We have the following: 
\begin{enumerate}
\item[] $K_{X}^2=8(1-g)$,
\item[] $K_{X}\cdot D=d\left(ae+a(2g-2)-2b\right)$,
\item[] $\sum_{k\geq3}t_{k}=f_{0}-t_{2}$, $\sum_{k\geq3}(k-1)t_{k}=f_{1}-f_{0}-t_{2}$, and
\item[]  
\item[] $D^2-\sum_{k\geq3}(k-1)^2t_{k}=d(-a^2e+2ab)+f_{1}-f_{0}+t_{2}$. For
  this equality, use Lemma \ref{eq:combinatorial equality1}(2).
\end{enumerate}

Substituting these values in the expression for $T^2$ and noting that $c_1^2(Y)=2^{d-1}T^2,$ we get:
\begin{equation}\label{eq:c_1^2(Y)}
\frac{1}{2^{d-3}}c_1^2(Y)
=32-32g+d(-a^2e+2ab+4ae+8ag-8a-8b)-9f_0+5f_1+t_{2}.\\
\end{equation}
Now we have, by \eqref{eq:e(Y)} and \eqref{eq:c_1^2(Y)},
\begin{equation}\label{3e(Y)-c_1^2(Y)}
\frac{1}{2^{d-3}}(3e(Y)- c_1^2(Y))=16-16g+d[(2b-ae)(5a-2)+4a(g-1)]+9f_0-2f_1-4t_2.\\
\end{equation}

\begin{remark}\label{No.of curves}
	By \eqref{eq:2-2g}, $F_p$ is rational if and only if $r_p=3$ and $F_p$  is elliptic if and only if $r_p=4.$ Thus we know that $Y$ contains $2^{d-4}t_{3}$ disjoint $(-2)$-curves (above the $3$-points) and contains $2^{d-5}t_{4}$ elliptic curves (above the $4$-points), each of self-intersection $-4$.
\end{remark}
\section{Harbourne Constants}\label{main-results} 
In this section, we will first show that the surface $Y$ (constructed
in the last
section; see Figure \ref{dia:diagram1}) has non-negative Kodaira
dimension. This will allow us to apply a Hirzebruch-Miyaoka-Sakai
inequality involving the Chern numbers of $Y$ and certain curves on
$Y$ coming from the arrangement $\mathcal{C}$ on $X$ (see
Theorem \ref {Theorem 4.6}). Using this we obtain a Hirzebruch-type
inequality \eqref{eq:Hirzebruch type ineq}. We prove our bound for the Harbourne
constant of $\mathcal{C}$ in Theorem \ref{Theorem 4.7}. 

We will use the notation of Section \ref{abelian-cover}. Recall that
$T$ is a $\rationals$-divisor on $\widetilde{X}$ defined in Lemma
\ref{Lemma 3.2}. We start with the following.

\begin{lemma}\label{eq:Lemma 4.1}
Let $X$ be a ruled surface with $e \ge 4$. Let $\mathcal{C}$ be a
transversal arrangement of curves satisfying 
Assumption
\ref{star}. Then $T\cdot E_{p}\geq 0$ for every $p \in
\text{Sing}(\mathcal{C})$ such that $r_p\geq3$. 
\end{lemma}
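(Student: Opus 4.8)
The plan is to compute $T \cdot E_p$ directly using the explicit description of $T$ from Lemma \ref{Lemma 3.2} and the intersection theory on $\widetilde{X}$. Recall that
$$T = \tau^{\star}K_X + \sum E_q + \frac{1}{2}\left(\sum E_q + \tau^{\star}D - \sum r_q E_q\right),$$
where the sums are over all singular points $q$ of $\mathcal{C}$ with $r_q \geq 3$. Fix such a point $p$. First I would use the standard facts about blow ups: $\tau^{\star}(\text{anything}) \cdot E_q = 0$ for every exceptional divisor $E_q$, while $E_q \cdot E_{q'} = 0$ for $q \neq q'$ and $E_p^2 = -1$. Therefore all the $\tau^{\star}K_X$ and $\tau^{\star}D$ terms drop out, as do the cross-terms $E_q \cdot E_p$ for $q \neq p$, and we are left with
$$T \cdot E_p = E_p^2 + \frac{1}{2}\left(E_p^2 - r_p E_p^2\right) = -1 + \frac{1}{2}(-1 + r_p) = \frac{r_p - 3}{2}.$$

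Once this identity is established, the conclusion is immediate: since $p$ is assumed to satisfy $r_p \geq 3$, we get $T \cdot E_p = \frac{r_p-3}{2} \geq 0$, with equality exactly when $p$ is a triple point. I would remark that the hypothesis $e \geq 4$ and the rest of Assumption \ref{star} are not actually needed for this particular lemma — the statement is purely local around the exceptional divisor and follows from the shape of $T$ alone — but since the lemma is stated under these running hypotheses there is no harm in keeping them, and they will be used in the subsequent lemmas where $T$ is paired against other divisor classes.

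There is essentially no obstacle here; the only point requiring a small amount of care is bookkeeping the summations correctly, namely observing that only the single term indexed by $p$ itself contributes to $T \cdot E_p$ because $\tau$ separates the exceptional divisors and pullbacks are orthogonal to all of them. I would present the computation in one displayed line, note the value $\frac{r_p-3}{2}$ explicitly (this exact value will likely be reused later), and conclude non-negativity from $r_p \geq 3$.
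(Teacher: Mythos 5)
Your computation is exactly the paper's proof: the paper also evaluates $T\cdot E_p = -1+\frac{r_p-1}{2}=\frac{r_p-3}{2}$ and concludes non-negativity from $r_p\geq 3$; you have merely spelled out the blow-up intersection facts that the paper leaves implicit. Your side remark that $e\geq 4$ is not needed here is also accurate.
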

\begin{proof}
	$T\cdot E_{p}=-1+\frac{-1+r_{p}}{2}\geq -1+ \frac{-1+3}{2}=0.$
\end{proof}
\begin{lemma}\label{eq:Lemma 4.2}
	Let $X$ be a ruled surface with $e \ge 4$. Let $\mathcal{C}$ be a
transversal arrangement of curves satisfying 
Assumption
\ref{star}.          Let $C_j'=\tau^{\star}C_j-\sum\limits_{p\in
  C_j,r_p\geq3}E_p$ be the strict transform of $C_j \in
\mathcal{C}$, for $j=1,2,\ldots,d$. Then  $ T\cdot C_j'\geq 0.$
\end{lemma}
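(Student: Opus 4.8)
The plan is to compute $T\cdot C_j'$ directly using the definition of $T$ from Lemma \ref{Lemma 3.2} and the combinatorial identity in Lemma \ref{eq:combinatorial equality1}(1), and then show the resulting expression is non-negative using the hypotheses $e\ge 4$, $d\ge 4$, $a>0$, $b\ge ae$.

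First I would expand $T\cdot C_j'$. Writing $T = \tau^{\star}K_X + \sum E_p + \frac12\big(\sum E_p + \tau^{*}D - \sum r_p E_p\big)$ and $C_j' = \tau^{\star}C_j - \sum_{p\in C_j, r_p\ge 3}E_p$, I would use the projection formula ($\tau^{\star}(\text{something})\cdot E_p = 0$), the relations $E_p^2 = -1$, $E_p\cdot E_q = 0$ for $p\ne q$, and $\tau^{\star}C_j\cdot \tau^{\star}D = C_j\cdot D = (2ab-a^2e)(d-1)$ (since every $C_i$ is linearly equivalent to $A$ and there are $d-1$ other curves). The terms involving $E_p$ contribute: from $\tau^{\star}K_X\cdot(-\sum E_p)$ nothing; from $\sum E_p\cdot(-\sum_{p\in C_j}E_p)$ a count of the points of $C_j$ with $r_p\ge 3$; from $\frac12\sum E_p$ and $-\frac12\sum r_p E_p$ against $-\sum_{p\in C_j}E_p$ a term $\frac12\sum_{p\in C_j, r_p\ge 3}(1 - r_p)\cdot(-1) = \frac12\sum_{p\in C_j, r_p\ge 3}(r_p-1)$; and $\frac12\tau^{*}D\cdot\tau^{\star}C_j = \frac12(2ab-a^2e)(d-1)$. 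I also need $\tau^{\star}K_X\cdot\tau^{\star}C_j = K_X\cdot C_j = ae + a(2g-2) - 2b$. Collecting, $T\cdot C_j'$ should come out to something like $(ae + a(2g-2) - 2b) + \tfrac12(2ab-a^2e)(d-1) + \big(\text{count of }r_p\ge 3\text{ points on }C_j\big) + \tfrac12\sum_{p\in C_j, r_p\ge 3}(r_p - 1)$, possibly after using Lemma \ref{eq:combinatorial equality1}(1) to rewrite $\sum_{p\in C_j}(r_p-1) = (2ab-a^2e)(d-1)$, which lets me combine the $E_p$-contributions into a clean multiple of $(2ab-a^2e)(d-1)$.

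The key estimate is then to bound this from below. The dangerous term is $K_X\cdot C_j = ae + 2a(g-1) - 2b$, which is negative (since $b\ge ae \ge 4a$ forces $-2b \le -2ae$, making $ae - 2b \le -ae$, quite negative), so the positive contribution must come from the $\tfrac12(2ab-a^2e)(d-1)$-type terms and the non-negative $E_p$-counts. Using $b\ge ae$ we get $2ab - a^2e \ge a^2 e \ge 4a^2 > 0$, and the coefficient $d-1\ge 3$; meanwhile $|K_X\cdot C_j| = 2b - ae - 2a(g-1) \le 2b - ae$ roughly of the same order as $2ab - a^2e$ divided by $a$. So the inequality $\tfrac12(2ab-a^2e)(d-1) \ge 2b - ae + 2a - 2ag$ (or whatever the precise form is) should follow from $d-1\ge 3$ and $a\ge 1$ after discarding the non-negative $E_p$-terms; the genus term $2a(g-1)$ with $g\ge 0$ only helps when $g\ge 1$ and is a bounded nuisance when $g=0$ (contributing $+2a$ on the good side, or $-2a$ which must be absorbed — here using $a\ge 1$ and the factor $e\ge 4$ gives enough room).

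The main obstacle I anticipate is purely bookkeeping: correctly tracking which exceptional divisors $E_p$ appear (only those with $r_p\ge 3$, and only those lying on $C_j$) and getting the factors of $\tfrac12$ and the signs right when intersecting $\tau^{*}D - \sum r_p E_p$ with $C_j' = \tau^{\star}C_j - \sum_{p\in C_j}E_p$ — in particular the cross term $(-\sum r_p E_p)\cdot(-\sum_{p\in C_j}E_p) = \sum_{p\in C_j, r_p\ge 3} r_p$ and $\tau^{*}D\cdot(-\sum E_p) = 0$. Once the algebra is in hand, the final positivity should reduce to a short case-free inequality in $a, b, e, d, g$ that is immediate from Assumption \ref{star}; if $g = 0$ turns out to be genuinely tight I would handle it by noting $e\ge 4$ and $d\ge 4$ give strict slack. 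I would present the computation as a single displayed chain of equalities ending in an expression manifestly $\ge 0$, citing Lemma \ref{eq:combinatorial equality1}(1) at the point where $\sum_{p\in C_j}(r_p-1)$ is substituted.
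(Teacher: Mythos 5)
Your overall route — expand $T\cdot C_j'$ via the projection formula and feed in Lemma \ref{eq:combinatorial equality1}(1) — is the same as the paper's, but the execution contains a sign error that changes the problem, and the estimate you then propose would not close the argument. First, $D\cdot C_j = d(2ab-a^2e)$, not $(d-1)(2ab-a^2e)$, since $C_j$ is itself a component of $D$. More seriously, because $E_p^2=-1$ the cross-term you compute has the wrong sign: $\bigl(-\tfrac12\sum r_pE_p\bigr)\cdot\bigl(-\sum_{p\in C_j}E_p\bigr)=\tfrac12\sum_{p\in C_j,\,r_p\ge3}r_pE_p^2=-\tfrac12\sum_{p\in C_j,\,r_p\ge3}r_p$, which is negative, not $+\tfrac12\sum r_p$ as you assert. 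Carrying the signs correctly, the exceptional contributions sum to $\sum_{p\in C_j,\,r_p\ge3}\tfrac{3-r_p}{2}=-\tfrac{(2ab-a^2e)(d-1)}{2}+f_0^j-\tfrac{t_2^j}{2}$ (with $f_0^j$, $t_2^j$ the numbers of singular and double points on $C_j$), so the large term $\tfrac{(d-1)(2ab-a^2e)}{2}$ cancels against $\tfrac12 D\cdot C_j$ and one is left with
\begin{equation*}
T\cdot C_j'=K_X\cdot C_j+\frac{2ab-a^2e}{2}+f_0^j-\frac{t_2^j}{2},
\end{equation*}
with no factor of $d-1$ surviving. Your anticipated expression, which keeps a positive $\tfrac12(2ab-a^2e)(d-1)$ and adds $+\tfrac12\sum(r_p-1)$, is therefore too large by roughly $(d-1)(2ab-a^2e)$, and the "trivial" positivity you expect is an artifact of that error.

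This matters because your fallback plan — discard the non-negative $E_p$-counts and beat $|K_X\cdot C_j|$ with the $(d-1)$-term — genuinely fails on the correct expression: for $a=1$, $g=0$, $b\ge e$ one has $K_X\cdot C_j+\tfrac{2ab-a^2e}{2}=\tfrac{e}{2}-2-b<0$, so the term $f_0^j-\tfrac{t_2^j}{2}$ cannot be thrown away. The missing idea is the paper's nontrivial lower bound
\begin{equation*}
f_0^j-\frac{t_2^j}{2}\;\ge\;\frac{t_2^j+2t_3^j+\cdots+(k-1)t_k^j}{k}\;=\;\frac{(2ab-a^2e)(d-1)}{k}\;\ge\;2ab-a^2e,
\end{equation*}
which uses the hypothesis $t_d=0$ to guarantee that every singular point on $C_j$ has multiplicity $k\le d-1$; only after injecting this extra $2ab-a^2e$ does the inequality reduce to $6ab-4a-4b\ge 3a^2e-2ae$, which is where $e\ge4$ is actually used. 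Your proposal never invokes $t_d=0$, and without it (or some substitute) the statement as you set it up cannot be proved.
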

\begin{proof}
Let $f_{0}^j$ denote the number of multiple points on $C_j$ and let $t_{k}^j$ denote the number of $k$-fold points on $C_j.$

Now,
\begin{equation}\label{eq:eq 4.1}
T\cdot C_j'= K_X\cdot C_j+\frac{D\cdot C_j}{2}-T\cdot\sum_{p\in C_j,r_p\geq3}E_p.
\end{equation}
We now compute each of the terms individually. 
\begin{align*}
K_X\cdot C_j&=2ae-2b+(2g-2-e)a,\\
 D\cdot C_j&= d(2ab-a^2e),\\
T\cdot E_{p}&=\frac{r_p-3}{2} ; \ \ p\in C_j,r_p\geq3.\\
\end{align*}
By Lemma \ref{eq:combinatorial equality1} (1), we have
\begin{align*}
T\cdot\sum_{p\in C_j,r_p\geq3}E_p&=
\sum_{p\in C_j,r_p\geq3}\frac{r_{p}-3}{2}\\
&=\sum_{p\in C_j,r_p\geq2}\frac{r_{p}-1}{2}-f_{0}^j+\frac{t_{2}^j}{2}\\
&=\frac{(2ab-a^2e)(d-1)}{2}-f_{0}^j+\frac{t_{2}^j}{2}.
\end{align*}
Plugging the values computed above in \eqref{eq:eq 4.1}, we get
\begin{eqnarray}\label{4.2}
T\cdot C_j'=2ae-2b+(2g-e-2)a+\frac{2ab-a^2e}{2} + f_{0}^j-\frac{t_{2}^j}{2}.
\end{eqnarray}
To prove the lemma, it suffices to show 
\begin{equation}\label{eq:eq 4.2}
f_{0}^j-\frac{t_{2}^j}{2} \geq -\left(\frac{2ab-a^2e}{2}\right)-2ae+a(e+2)+2b.
\end{equation}
Let $k$ be the maximum of the multiplicities	 of the points on $C_j$.
By Lemma \ref{eq:combinatorial equality1} (1),  we have 
$$t_{2}^j+2t_{3}^j+\ldots+(k-1)t_k^j=(2ab-a^2e)(d-1).$$

Now,
\begin{align*}
f_{0}^j-\frac{t_{2}^j}{2}&=\frac{t_{2}^j}{2}+t_{3}^j+\ldots+t_{k}^j\\ &\geq \frac{t_{2}^j+2t_{3}^j+\ldots+(k-1)t_k^j}{k}=\frac{(2ab-a^2e)(d-1)}{k}\\&\geq 2ab-a^2e,
\end{align*} where last inequality holds since $k\leq d-1.$

Thus in order to show \eqref{eq:eq 4.2}, it suffices to show  the
following inequality:  
\begin{equation}\label{eq:eq 4.3}
2ab-a^2e \geq -\left(\frac{2ab-a^2e}{2}\right)-2ae+a(e+2)+2b.
\end{equation}

Now we have the following: 
\begin{eqnarray*}
\eqref{eq:eq 4.3} &\Leftrightarrow&  6ab-4a-4b \geq 3a^2e-2ae\\
&\Leftarrow& b\geq\frac{4a}{3a-2}\\
&\Leftarrow& ae \geq \frac{4a}{3a-2}\\
& \Leftarrow& e \ge 4. 
\end{eqnarray*}
The last inequality holds by Assumption \ref{star}. 
\end{proof}
We now make a further assumption on our arrangement
$\mathcal{C}$. This is required for our argument showing that $K_Y$ is
nef. 

\begin{assumption}\label{extra-assumption}
Let $X$ be a ruled surface over a smooth curve with $e \ge 4$. 
Let $\mathcal{C}$ be a transversal arrangement of curves on a ruled
surface $X$
satisfying Assumption \ref{star}. Assume further that $\mathcal{C}$ satisfies one of the
following conditions: 
\begin{enumerate}
\item $a\geq 2$, or
\item $a=1$ and there exists a subset of four curves in $\mathcal{C}$
  such that there is no point common to all the four curves. 
\end{enumerate}
\end{assumption}

\begin{question}
We do not know any example of a transversal arrangement for which
Assumption \ref{extra-assumption} does not hold. Does this 
assumption always hold for any arrangement satisfying Assumption
\ref{star}?
\end{question}

\begin{theorem}\label{Theorem 4.5} Let $X$ be a ruled surface with $e
  \ge 4$ and let $\mathcal{C}$ be a transversal arrangement of curves
  satisfying Assumption \ref{extra-assumption}. Let $Y$ be the surface
  constructed in Figure \ref{dia:diagram1}. Then 
$K_Y$ is nef. 
\end{theorem}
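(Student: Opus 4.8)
The plan is to show that $K_Y = \sigma^{\star}T$ is nef by checking that $T$ pairs non-negatively with every irreducible curve on $\widetilde{X}$, since $\sigma$ is finite and surjective and pulls $K_Y$ back to a numerically equivalent situation (a $\mathbb{Q}$-divisor pulled back along a finite map is nef iff it is nef downstairs). So I would reduce the whole statement to: \emph{$T \cdot B \ge 0$ for every irreducible curve $B \subset \widetilde{X}$.} There are essentially three families of curves to handle: (i) the exceptional divisors $E_p$ over the blown-up points of multiplicity $\ge 3$; (ii) the strict transforms $C_j'$ of the arrangement curves; and (iii) all other irreducible curves, i.e.\ strict transforms of curves on $X$ not in $\mathcal{C}$, together with fibers of $\phi$ and the section $C_0$ if they are not components of $D$.

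Cases (i) and (ii) are already done: Lemma \ref{eq:Lemma 4.1} gives $T \cdot E_p \ge 0$ and Lemma \ref{eq:Lemma 4.2} gives $T \cdot C_j' \ge 0$. So the real work is case (iii). Here I would write $T = \tau^{\star}K_X + \tfrac{1}{2}\tau^{\star}D + \sum_{p}\bigl(1 - \tfrac{r_p}{2}\bigr)E_p + \tfrac12\sum_p E_p$, i.e.\ collect $T = \tau^{\star}\bigl(K_X + \tfrac12 D\bigr) + \sum_p \bigl(\tfrac{3 - r_p}{2}\bigr)E_p$, so that for an irreducible $B$ not among the $E_p$ we get $T \cdot B = (K_X + \tfrac12 D)\cdot \tau_{\star}B + \sum_p \tfrac{3-r_p}{2}(E_p \cdot B)$. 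Since $3 - r_p \le 0$ for $r_p \ge 3$, the exceptional contributions are $\le 0$, so I need the ``main term'' $(K_X + \tfrac12 D)\cdot \tau_{\star}B$ to dominate. Writing $\tau_{\star}B \equiv \alpha C_0 + \beta f$ with $\alpha \ge 0$, and using $K_X \equiv -2C_0 + (2g-2-e)f$, $D \equiv d(aC_0 + bf)$, one computes $(K_X + \tfrac12 D)\cdot(\alpha C_0 + \beta f) = \bigl(\tfrac{da}{2} - 2\bigr)\beta + \bigl(\alpha\bigr)\bigl(\text{terms in } e, g, b, \ldots\bigr)$; the point is that $da/2 - 2 > 0$ since $d \ge 4, a \ge 1$, and the coefficient of $C_0$ works out non-negative because $b \ge ae$ and $e \ge 4$ (this is where Assumption \ref{star} is used again). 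I would also separately note the finitely many curves that could have $\alpha = 0$ — namely fibers $f$, for which $(K_X + \tfrac12 D)\cdot f = -2 + \tfrac{da}{2} \ge 0$ — and that $B$ cannot equal a component $C_i$ of $D$ in this case since those were handled in (ii).

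The subtle point — and I expect this to be the main obstacle — is bounding the negative exceptional contributions $\sum_p \tfrac{3-r_p}{2}(E_p\cdot B)$ against the main term when $B$ passes through many of the singular points of $\mathcal{C}$ with high multiplicity. A curve $B$ could a priori pass through several points $p$ with large $r_p$, each contributing $-\tfrac{r_p - 3}{2}$ times the local intersection multiplicity. The resolution should parallel the argument in Lemma \ref{eq:Lemma 4.2}: bound $\sum_{p\in B} (r_p - 3)(E_p\cdot B)$ using the combinatorial identity from Lemma \ref{eq:combinatorial equality1}(1) applied along $B$ (or a Bézout-type count of $D \cdot B = d(2ab - a^2e)$ distributed among the multiple points lying on $B$), to show the deficiency is at most something like $(2ab - a^2 e)\cdot(\text{multiplicity of }B)$, which is then absorbed by the $\tfrac12 D \cdot \tau_\star B$ part of the main term. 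The case $a = 1$, $d = 4$ with a common point of three curves is exactly the degenerate situation where this bookkeeping is tight, which is why Assumption \ref{extra-assumption} is imposed: condition (2) guarantees no single $4$-fold point of the type that would push some $T \cdot C_j'$ or $T \cdot B$ negative, and condition (1) ($a \ge 2$) gives enough slack in the main term to avoid needing it. I would organize the proof by first stating the reduction to irreducible curves on $\widetilde X$, then dispatching (i) and (ii) by citation to the two preceding lemmas, and finally spending the bulk of the argument on (iii) with the $C_0$-and-$f$ coordinate computation plus the Bézout bound on exceptional contributions.
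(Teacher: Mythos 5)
Your reduction to showing $T\cdot B\ge 0$ for every irreducible curve $B\subset\widetilde X$ is sound (since $\sigma$ is finite surjective), and your cases (i) and (ii) are indeed exactly Lemmas \ref{eq:Lemma 4.1} and \ref{eq:Lemma 4.2}. The gap is case (iii), which you correctly flag as the main obstacle but do not actually close. Writing $T=\tau^{\star}\bigl(K_X+\tfrac12 D\bigr)+\sum_p\tfrac{3-r_p}{2}E_p$ and trying to absorb the negative exceptional contributions into $\tfrac12 D\cdot\tau_{\star}B$ via a B\'ezout count does not work as sketched: the local intersection bound gives $\sum_p r_p(E_p\cdot B)\le D\cdot\tau_{\star}B$, so after using $\tfrac12 D\cdot\tau_{\star}B$ to cancel $\sum_p\tfrac{r_p}{2}(E_p\cdot B)$ you are left with at best $K_X\cdot\tau_{\star}B$, which is negative for fibers ($K_X\cdot f=-2$) and for other curves of low degree. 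One must instead interpolate between the case of no singular points on $B$ (where $(K_X+\tfrac12 D)\cdot\tau_{\star}B\ge 0$ suffices) and the case of few high-multiplicity points (where $r_p\le d-1$ from $t_d=0$ is needed to make the numbers come out), and your proposal gives no argument that this bookkeeping closes for a general $B\equiv\alpha C_0+\beta f$.

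The paper avoids case (iii) entirely by a device that is the actual key idea of the proof and is absent from your proposal: rewrite $K_X\equiv-2C_0+(2g-2-e)f$ as $(pf+qC_0)-\tfrac{C_1+C_2}{2}$ with $q=a-2\ge0$ when $a\ge2$, or as $pf-\tfrac{C_1+C_2+C_3+C_4}{2}$ when $a=1$. Substituting this into the expression for $T$ from Lemma \ref{Lemma 3.2} exhibits $T$ as a non-negative combination $\tau^{\star}(pf+qC_0)+\tfrac12\sum_{i\ge3}C_i'+\sum_p\lambda_pE_p$ with all $\lambda_p\ge0$; any curve outside the support of this effective $\rationals$-divisor then meets $T$ non-negatively for free, and the curves inside the support are precisely those covered by the two lemmas. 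This also shows you have misidentified the role of Assumption \ref{extra-assumption}: it is not needed to keep $T\cdot C_j'$ or $T\cdot B$ non-negative (Lemma \ref{eq:Lemma 4.2} uses only Assumption \ref{star}), but rather to make the coefficients of the decomposition non-negative --- condition (1) gives $q=a-2\ge0$, and condition (2) guarantees that $\lambda_p=\tfrac32-\tfrac{\#\{i\le4:\,p\in C_i\}}{2}$ never drops below zero, which would happen exactly at a point common to all four chosen curves.
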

\begin{proof}
Recall (see Lemma \ref{Lemma 3.2}) that $T$ is a divisor on $\widetilde{X}$ given by 
\begin{equation}\label{eq:eq 4.5}
T:=\tau^{\star}K_{X}+\frac{3}{2}\sum_{r_p\geq 3} E_{p}+\frac{1}{2}\sum C_i',
\end{equation} 
where $C_i'$ is the strict transform of $C_i$ by $\tau$ and
$E_p=\tau^{-1}(p).$ Note that $K_Y = \sigma^{\star}T$. 
We have $\tau^{\star}C_i=C_i'+\sum_{p \in C_i,r_p\geq 3} E_{p}.$ 

We want to express $T$ as a positive sum of effective divisors on
$\widetilde{X}.$ The negative terms in the expression occur because of
the term involving $K_X = -2C_0+(2g-2-e)f.$
We consider two different cases.

\textbf{Case (1):} Assume $a \geq 2.$
   Let $C_1, C_2 \in \mathcal{C}$. 

For $q:=a-2\geq 0, p:=2g-e-2+b \geq 0$, we have
$K_X = pf+qC_0-\frac{C_1+C_2}{2}$. Note that $p >  0$, since $b \ge
ae$ and $e \ge 4$. 

Thus, \eqref{eq:eq 4.5} becomes,
\begin{eqnarray*}
T&=&\tau^{\star}(pf+qC_0)-\frac{1}{2}\left(C_1'+\sum_{p \in C_1,r_p\geq 3} E_{p}+C_2'+\sum_{p \in C_2,r_p\geq 3} E_{p}\right)+\frac{3}{2}\sum_{r_p\geq 3} E_{p}+\frac{1}{2}\sum_{i=1}^{d} C_i'\\
&=&\tau^{\star}(pf+qC_0)+\frac{1}{2}\sum_{i=3}^{d}
    C_i'+\sum\lambda_{p}E_{p},  \text{~for some~} \lambda_{p}.
\end{eqnarray*} 

Note that $\lambda_p$
is non-negative for every point $p \in \text{Sing}(\mathcal{C})$ with $r_p \ge
3$. Indeed, $\lambda_p = \frac{3}{2}$ if $p \notin C_1 \cup C_2$;
$\lambda_p = 1$ if $p$ belongs to exactly one of the curves $C_1$ or
$C_2$; and $\lambda_p = \frac{1}{2}$ if $p \in C_1 \cap C_2$.
Thus $T$ is effective and we have 
$$K_Y = \sigma^{\star}T = \sigma^{\star}\tau^{\star}(pf+qC_0)+\sigma^{\star}\left(\frac{1}{2}\sum_{i=3}^{d} C_i'\right)+\sigma^{\star}(\sum\lambda_{p}E_{p}).$$
If $C$ is a curve in $Y$ not contained in $\sigma^{\star}E_{p}$ and $\sigma^{\star}C_i',$
$$K_Y\cdot C = 0 + \sigma^{\star}\left(\frac{1}{2}\sum_{i=3}^{d} C_i'\right)\cdot C+\sigma^{\star}\left(\sum\lambda_{p}E_{p}\right)\cdot C \geq 0.$$

If $C$ is a curve in $Y$ such that $C$ is either $\sigma^{\star}C_i'$
or in $\sigma^{\star}E_{p},$ 
Lemma \ref{eq:Lemma 4.1} and Lemma \ref{eq:Lemma 4.2} imply that $K_Y\cdot C \geq 0.$ 
Thus $K_Y\cdot C\geq 0$ for every curve $C$ in $Y$.
Hence, $K_Y$ is nef.

\textbf{Case (2):} Suppose that $a=1$. By
Assumption \ref{extra-assumption}, there are four
curves, say $C_1, C_2, C_3, C_4$, in $\mathcal{C}$ such that no point is contained in all the four curves.

Let $p:=2g-2-e+2b > 0$. Then $K_X = pf-\frac{C_1+C_2+C_3+C_4}{2}$.

Thus,
\begin{align*}
T&=\tau^{\star}(pf)-\frac{1}{2}\left(\sum_{i=1}^{4}C_i'+\sum_{p \in C_i,r_p\geq 3} E_{p}\right)+\frac{3}{2}\sum_{r_p\geq 3} E_{p}+\frac{1}{2}\sum_{i=1}^{d} C_i'.\\
&=\tau^{\star}(pf)-\frac{1}{2}\left(\sum_{p \in C_i,r_p\geq 3} E_{p}\right)+\frac{3}{2}\sum_{r_p\geq 3} E_{p}+\frac{1}{2}\sum_{i=5}^{d} C_i'.\\
&=\tau^{\star}(pf)+\frac{1}{2}\sum_{i=5}^{d}
  C_i'+\sum\lambda_{p}'E_{p}, \text{~for some~} \lambda_{p}'.
\end{align*}

We have $\lambda_p' = \frac{3}{2}$ if $p \notin C_1 \cup C_2 \cup
C_3 \cup C_4$. By Assumption \ref{extra-assumption} and the choice of $C_1, C_2, C_3, C_4$, there are no
points in the intersection $C_1 \cap C_2 \cap C_3 \cap C_4$. If $p$
belongs to three of them, then $\lambda_p' = \frac{3}{2} - \frac{3}{2}
= 0$. So we have $\lambda_p' \ge 0$ for all $p \in \text{Sing}(\mathcal{C})$ with $r_p \ge
3$.  Thus $T$ is effective and we have 
$$K_Y=\sigma^{\star}\tau^{\star}(pf)+\frac{1}{2}\sigma^{\star}\left(\sum_{i=5}^{d} C_i'\right)+\sigma^{\star}\left(\sum\lambda_{p}'E_{p}\right). $$

If $C$ is a curve in $Y$ not contained in  $\sigma^{\star}E_{p}$ and $\sigma^{\star}C_i',$
$$K_Y\cdot C = 0 + \sigma^{\star}\left(\frac{1}{2}\sum_{i=5}^{d} C_i'\right)\cdot C+\sigma^{\star}\left(\sum\lambda_{p}'E_{p}\right)\cdot C \geq 0.$$

If $C$ is a curve in $Y$ such that $C$ is either $\sigma^{\star}C_i'$
or in $\sigma^{\star}E_{p},$ Lemma \ref{eq:Lemma 4.1} and Lemma \ref{eq:Lemma 4.2} imply that $K_Y\cdot C \geq 0.$ Thus $K_Y\cdot C\geq 0$ for every curve $C$ in $Y.$
Hence, $K_Y$ is nef.
\end{proof}

The following result of Hirzebruch \cite[Theorem 3, Page 144]{H2} is crucial in our computations. It
strengthens earlier results of Miyaoka and Sakai. 

\begin{theorem}[Hirzebruch]\label{Theorem 4.6}
	Let $X$ be a smooth surface of general type and $E_{1}, \ldots,
        E_{k}$ configurations (disjoint to each other) of rational
        curves on  $X$ (arising from quotient singularities) and
        $C_{1}, \ldots, C_{p}$  smooth elliptic curves (disjoint to each
        other and disjoint to the $E_{i}$).  Let $c_{1}^{2}(X), c_{2}(X)$ be the Chern numbers of $X$. Then
	$$3c_{2}(X) - c_{1}^{2}(X) \geq \sum_{j=1}^{p}(-C_{j}^{2}) + \sum_{i=1}^{k}m(E_{i}).$$
\end{theorem}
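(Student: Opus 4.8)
Since this is a theorem of Hirzebruch, my plan is not to reprove it but to cite \cite[Theorem 3, Page 144]{H2} (in the form in which we shall apply it, see also \cite{BHH}); for orientation I sketch the circle of ideas. The statement is a sharpening of the Bogomolov--Miyaoka--Yau inequality $c_1^2(X)\le 3c_2(X)$ for minimal surfaces of general type, in which the mutually disjoint rational configurations $E_i$ and the disjoint elliptic curves $C_j$ contribute extra nonnegative terms because they behave, respectively, like orbifold loci and like logarithmic (boundary) loci.

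For the rational configurations $E_i$ I would pass to the normal projective surface $\bar X$ obtained by contracting all the $E_i$ at once; since each $E_i$ is the exceptional set of a quotient singularity, $\bar X$ has quotient (equivalently, log terminal) singularities $p_1,\dots,p_k$, and there is a birational morphism $f\colon X\to\bar X$. Using that $X$ is of general type and that each $E_i$ is negative definite, one checks $K_X\cdot C\ge 0$ for every component $C$ of every $E_i$; this makes $K_{\bar X}$ nef and big, gives $c_1^2(X)\le K_{\bar X}^2$, and gives $c_2(X)=e(X)=e(\bar X)+\sum_i\bigl(e(E_i)-1\bigr)$. Feeding this into the orbifold Bogomolov--Miyaoka--Yau inequality for surfaces with quotient singularities, $K_{\bar X}^2\le 3\,e_{\mathrm{orb}}(\bar X)$ with $e_{\mathrm{orb}}(\bar X)=e(\bar X)-\sum_i\bigl(1-\tfrac1{|G_i|}\bigr)$ and $G_i$ the local fundamental group at $p_i$, produces $3c_2(X)-c_1^2(X)\ge\sum_i m(E_i)$, where $m(E_i)$ is an explicit positive combinatorial quantity of the configuration; for the single case we need later, an $A_1$ configuration (one $(-2)$-curve, $|G_i|=2$), this is just a numerical constant computed directly.

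For the disjoint smooth elliptic curves $C_j$ I would instead work logarithmically, with $K_X+\sum_j C_j$. Since $e(C_j)=0$ one has $\bar e\bigl(X\setminus\bigcup_j C_j\bigr)=e(X)$, and adjunction gives $K_X\cdot C_j=-C_j^2$, so by disjointness $\bigl(K_X+\sum_j C_j\bigr)^2=K_X^2-\sum_j C_j^2$; the logarithmic Miyaoka--Sakai inequality $\bigl(K_X+\sum_j C_j\bigr)^2\le 3\,\bar e\bigl(X\setminus\bigcup_j C_j\bigr)$ then gives exactly $c_1^2(X)+\sum_j(-C_j^2)\le 3c_2(X)$. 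Hirzebruch's theorem is the assertion that the two effects add, which one obtains by running the combined log--orbifold Bogomolov--Miyaoka--Yau inequality on the pair $\bigl(\bar X,\,f_*\sum_j C_j\bigr)$ carrying the orbifold structure at the $p_i$.

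The part I expect to be the real obstacle is not any individual computation but invoking the correct strong (log--orbifold) form of the Bogomolov--Miyaoka--Yau inequality and checking its hypotheses: nefness and bigness of the relevant (log) canonical divisor on $\bar X$, the bounds $K_X\cdot C\ge 0$ on the contracted curves, and the disjointness of all curves involved. These are precisely where the hypotheses that $X$ is of general type and that the $E_i$ and $C_j$ are mutually disjoint get used. As all of this is carried out in \cite{H2} (and, in closely related forms, in \cite{BHH}), I would simply cite it rather than reproduce the argument.
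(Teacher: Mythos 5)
The paper offers no proof of this statement: it is imported verbatim as Hirzebruch's result, cited as \cite[Theorem 3, Page 144]{H2} (with the added remark that it extends to surfaces of non-negative Kodaira dimension), which is exactly what you propose to do. Your orienting sketch via the orbifold and logarithmic Miyaoka--Sakai inequalities is consistent with how Hirzebruch's argument goes, so the proposal takes essentially the same approach as the paper.
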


Hirzebruch in fact remarks that the result also holds when $X$ has
non-negative Kodaira dimension. We use the theorem in this case. 

The numbers $m(E_{i})$ mentioned in the theorem are positive numbers
defined using certain invariants (Euler characteristics, self-intersections) of
the arrangements $E_i$. Hirzebruch gives a formula to compute them
in \cite[Page 144, (5)]{H2} which shows that 
if $E_{i}$ is a single $(-2)$-curve, then  $m(E_{i})=\frac{9}{2}.$
See also \cite{He}. 

Now we are ready to prove the main result of this paper.

\begin{theorem}\label{Theorem 4.7}
Let $X$ be a ruled surface with $e
  \ge 4$ over a smooth curve of genus $g$.  Let $\mathcal{C}$ be a transversal arrangement of curves
  satisfying Assumption \ref{extra-assumption}. In particular, each curve in
  $\mathcal{C}$ is numerically equivalent to $aC_0+bf$ with $a > 0$
  and $b \ge ae$. 
Then we have the
  following bound on the Harbourne constant of $\mathcal{C}$:
\begin{equation}\label{eq:H const1}
H(X,\mathcal{C}) \geq \frac{-9}{2}-\frac{8}{f_{0}}+\frac{d}{f_{0}}\left(\frac{(ae-2b)}{2}(3a-2)-2a(g-1)\right)+\frac{16g+4t_2+t_4}{2f_0}+\frac{9t_3}{8f_0}.
\end{equation}
\end{theorem}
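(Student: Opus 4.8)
The plan is to combine the Hirzebruch–Miyaoka–Sakai inequality of Theorem \ref{Theorem 4.6}, applied to the surface $Y$ from Figure \ref{dia:diagram1}, with the Chern number computations \eqref{eq:e(Y)} and \eqref{eq:c_1^2(Y)} (equivalently the combination \eqref{3e(Y)-c_1^2(Y)}), and then translate the resulting inequality on the combinatorial invariants $f_0,f_1,t_2,t_3,t_4$ of $\mathcal{C}$ into a bound on $H(X,\mathcal{C})$ via Definition \ref{def:H-constant TA}.

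\textbf{Step 1: Set up the HMS inequality on $Y$.} By Theorem \ref{Theorem 4.5}, $K_Y$ is nef, so $Y$ has non-negative Kodaira dimension and Theorem \ref{Theorem 4.6} applies (using Hirzebruch's remark that it holds in this generality). The disjoint rational configurations on $Y$ are the $(-2)$-curves described in Remark \ref{No.of curves}: there are $2^{d-4}t_3$ of them lying above the triple points, each a single $(-2)$-curve with $m(E_i)=\tfrac92$. The disjoint smooth elliptic curves are the $2^{d-5}t_4$ curves above the $4$-fold points, each with self-intersection $-4$ by Remark \ref{No.of curves}, contributing $-C_j^2 = 4$ each. (One should note these two families are mutually disjoint and disjoint from each other's configurations since they sit over distinct points $p$ and $\sigma^\star E_p$ are disjoint over distinct $p$.) Thus Theorem \ref{Theorem 4.6} gives
\[
3e(Y) - c_1^2(Y) \;\geq\; \frac{9}{2}\cdot 2^{d-4} t_3 \;+\; 4\cdot 2^{d-5} t_4 .
\]

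\textbf{Step 2: Extract the Hirzebruch-type inequality.} Divide through by $2^{d-3}$ and substitute \eqref{3e(Y)-c_1^2(Y)} for the left side. This yields
\begin{equation}\label{eq:Hirzebruch type ineq}
16-16g+d\bigl[(2b-ae)(5a-2)+4a(g-1)\bigr]+9f_0-2f_1-4t_2 \;\geq\; \frac{9}{4}t_3 + t_4,
\end{equation}
an inequality purely in the combinatorial data of $\mathcal{C}$ together with the numerical class $aC_0+bf$ and the genus $g$.

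\textbf{Step 3: Convert to a bound on $H(X,\mathcal{C})$.} From Definition \ref{def:H-constant TA}, $H(X,\mathcal{C}) = \frac{1}{f_0}\bigl(D^2 - \sum_P r_P^2\bigr) = \frac{1}{f_0}\bigl(D^2 - f_2\bigr)$. Now $D^2 = \bigl(\sum C_i\bigr)^2 = d(C_i^2) + 2\sum_{i<j} C_i\cdot C_j$; using $C_i^2 = 2ab-a^2e$ (Assumption \ref{star}) and \eqref{eq:combinatorial general} together with Lemma \ref{eq:combinatorial equality1}(2), one gets $D^2 - f_2 = d(2ab - a^2 e) - f_1$ (since $2\sum_{i<j}C_i\cdot C_j = f_2 - f_1$). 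Hence
\[
H(X,\mathcal{C}) = \frac{d(2ab-a^2e) - f_1}{f_0}.
\]
Now solve \eqref{eq:Hirzebruch type ineq} for $f_1$: it gives $2f_1 \leq 16-16g+d[(2b-ae)(5a-2)+4a(g-1)]+9f_0-4t_2 - \tfrac94 t_3 - t_4$, i.e.
\[
-f_1 \geq -\frac{9}{2}f_0 - 8 + 8g - \frac{d}{2}\bigl[(2b-ae)(5a-2)+4a(g-1)\bigr] + 2t_2 + \frac{9}{8}t_3 + \frac{1}{2}t_4.
\]
Add $d(2ab-a^2e) = d\cdot a(2b-ae)$ to both sides, divide by $f_0$, and collect the $d/f_0$ terms: the coefficient of $d/f_0$ becomes $a(2b-ae) - \tfrac12(2b-ae)(5a-2) - 2a(g-1) = \tfrac{(ae-2b)}{2}(3a-2) - 2a(g-1)$, and the constant and remaining terms reassemble into $-\tfrac92 - \tfrac{8}{f_0} + \tfrac{16g + 4t_2 + t_4}{2f_0} + \tfrac{9t_3}{8f_0}$, which is exactly \eqref{eq:H const1}.

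\textbf{Main obstacle.} The conceptual content is all in Step 1 — verifying that the hypotheses of Hirzebruch's theorem are genuinely met ($K_Y$ nef via Theorem \ref{Theorem 4.5}, the curve configurations are the right type and mutually disjoint, the numbers $m(E_i)$ for single $(-2)$-curves equal $\tfrac92$). Steps 2 and 3 are bookkeeping: the one place demanding care is the algebraic rearrangement in Step 3, where the cancellation producing the clean coefficient $\tfrac{(ae-2b)}{2}(3a-2) - 2a(g-1)$ must be tracked correctly, and one must double-check the identity $D^2 - f_2 = d(2ab-a^2e) - f_1$ using Lemma \ref{eq:combinatorial equality1}(2).
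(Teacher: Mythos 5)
Your proposal is correct and follows essentially the same route as the paper: nefness of $K_Y$ from Theorem \ref{Theorem 4.5}, the Hirzebruch--Miyaoka--Sakai inequality applied to the $(-2)$-curves and elliptic curves of Remark \ref{No.of curves}, substitution of \eqref{3e(Y)-c_1^2(Y)}, and the identity $H(X,\mathcal{C})=\frac{d(2ab-a^2e)-f_1}{f_0}$ followed by solving for $-f_1$. The only cosmetic difference is that you derive $D^2-f_2=d(2ab-a^2e)-f_1$ via \eqref{eq:combinatorial general}, whereas the paper computes $D^2=(2ab-a^2e)d^2$ directly and then applies Lemma \ref{eq:combinatorial equality1}(2); these are equivalent.
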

\begin{proof}
By Remark \ref{No.of curves}, the surface $Y$ (constructed in Figure
\ref{dia:diagram1}) contains $2^{d-4}t_{3}$ disjoint rational $(-2)$-curves $E_i$
(above the $3$-points) and contains $2^{d-5}t_{4}$ elliptic
curves $C_j$ (above the $4$-points), each of self-intersection $-4$.

By Theorem \ref{Theorem 4.5}, $K_Y$ is nef. Thus, by Theorem \ref {Theorem 4.6}:
\begin{equation}\label{eq:Miyaoka-Sakai}
\frac{3c_{2}(Y) - c_{1}^{2}(Y)}{2^{d-3}} \geq \frac{ \sum(-C_{j}^{2}) + \sum m(E_{i})}{2^{d-3}}. 
\end{equation}

As noted earlier,
$m(E_{i})=\frac{9}{2}$ for all rational curves $E_{i}$ of self-intersection $-2.$

From \eqref{3e(Y)-c_1^2(Y)}, we have,
\begin{align*}
\frac{1}{2^{d-3}}(3e(Y)- c_1^2(Y))=16-16g+d[(2b-ae)(5a-2)+4a(g-1)]+9f_0-2f_1-4t_2.
\end{align*}

Also, from our discussion above,we have  
\begin{align*}
\sum m(E_i)&=\frac{9}{2}2^{d-4}t_{3}, \text{~and}  & \\
\sum (-C_j^2)&= 4t_42^{d-5}.
\end{align*}

Plugging these values in \eqref {eq:Miyaoka-Sakai} and simplifying,
we  have :
\begin{equation}\label{eq:4.7}
16-16g+d(2ae-5a^2e+10ab+4ag-4a-4b)+9f_0-2f_1-4t_2 -t_{4} - \frac{9}{4}t_{3}\geq 0.
\end{equation}

Simplifying and re-arranging \eqref{eq:4.7}, we  obtain the following Hirzebruch-type inequality for $\mathcal{C}$:\\
\begin{equation}\label{eq:Hirzebruch type ineq}
t_{2}+\frac{3}{4}t_{3} \geq -16+16g+\sum_{k\geq5}(2k-9)t_{k}+
d(e(5a^2-2a)-10ab-4ag+4a+4b).\\
\end{equation}

Now we bound $H(X, \mathcal{C})$. 
We have
\begin{align*}
H(X,\mathcal{C})&=\frac{(2ab-a^2e)d^2-\sum_{k\geq 2}k^2t_{k}}{f_0}
&=\frac{(2ab-a^2e)d^2-f_2}{f_0}
&=\frac{(2ab-a^2e)d-f_1}{f_0},
\end{align*}
where the last equality follows from  Lemma \ref{eq:combinatorial equality1}(2).

From \eqref{eq:4.7}, we have $$-f_{1} \geq \frac{-16+16g+d\left(e(5a^2-2a)-10ab-4ag+4a+4b\right)-9f_{0}+4t_{2}+\frac{9}{4}t_{3}+t_{4}}{2}.$$

Thus,
\begin{align*}
	H(X,\mathcal{C}) &\geq \frac{d(-a^2e+2ab)-8+8g+\frac{d(e(5a^2-2a)-10ab-4ag+4a+4b)-9f_{0}}{2}+2t_{2}+\frac{9}{8}t_{3}+\frac{t_{4}}{2}}{f_{0}}\\
	&=
	\frac{-9}{2}-\frac{8}{f_{0}}+\frac{d}{f_{0}}\left(\frac{ae}{2}(3a-2)-2ag-3ab+2a+2b\right)+\frac{16g+4t_2+t_4}{2f_0}+\frac{9t_3}{8f_0}. 
\end{align*} This completes the proof of the theorem. 
\end{proof}

If the curves in the arrangement $\mathcal{C}$ do not intersect the
normalized section $C_0$, then we obtain an improved bound for
the $H$-constants as shown in the following proposition. We obtain an
improved bound in this case because $Y$ contains some additional
rational curves. 

\begin{proposition}\label{Prop 3.6}
Let $X$ be a ruled surface with $e
  \ge 4$ over a smooth curve of genus $g$.  Let $\mathcal{C}$ be a transversal arrangement of curves
  satisfying Assumption \ref{extra-assumption}.
Assume further that no curve in $\mathcal{C}$ intersects the
normalized section $C_0$. Then we have the following bound on the
Harbourne constant of $\mathcal{C}$: 
\begin{equation}\label{eq:Hconst2}
H(X,\mathcal{C}) \geq \frac{-9}{2}+\frac{d}{f_{0}}\left(\frac{ae(2-3a)-4a(g-1)}{2}\right)+\frac{16g+4t_2+t_4}{2f_0}+\frac{9t_3}{8f_0}.
\end{equation}
\end{proposition}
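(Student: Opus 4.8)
The plan is to repeat the argument of Theorem \ref{Theorem 4.7}, but to enlarge the collection of curves on $Y$ that feed into the Hirzebruch--Miyaoka--Sakai inequality of Theorem \ref{Theorem 4.6}. The extra input is the normalized section $C_0$: since $C_0^2 = -e \le -4$ and, by hypothesis, $C_0$ meets no curve of $\mathcal{C}$, the preimage $\tau^{-1}(C_0)$ in $\widetilde{X}$ is disjoint from all the exceptional divisors $E_p$ and from the strict transforms $C_i'$; hence $C_0$ lifts under $\tau$ to a smooth curve isomorphic to $C_0$ with self-intersection $-e$. Its preimage $\sigma^{\star}(C_0)$ in $Y$ lies entirely outside the branch locus of $\sigma$, so $\sigma$ is \'etale over it and $\sigma^{\star}(C_0)$ is a disjoint union of $2^{d-1}$ smooth curves, each isomorphic to $C_0$ (hence of genus $g$) and each of self-intersection $-e$. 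These curves are also disjoint from the $(-2)$-curves above the $3$-points and the elliptic curves above the $4$-points, by the disjointness of $C_0$ from $\mathcal{C}$.

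First I would record that $K_Y$ is still nef: Assumption \ref{extra-assumption} is in force, so Theorem \ref{Theorem 4.5} applies verbatim. Next I would decide which of the new curves $\sigma^{\star}(C_0)$ can legitimately be inserted into Theorem \ref{Theorem 4.6}. When $g = 0$ each component is a $(-e)$-curve with $e \ge 4$; such a curve is a rational curve arising from a quotient singularity (cyclic quotient of type $\frac{1}{e}(1,1)$) and contributes a positive number $m$; by the formula of Hirzebruch quoted after Theorem \ref{Theorem 4.6}, a single $(-e)$-curve contributes exactly $e$ (this is the special case $m(E) = -E^2$ for a single smooth rational curve of self-intersection $\le -2$). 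When $g \ge 1$, however, these curves are not rational, so Theorem \ref{Theorem 4.6} as stated does not accept them directly; the cleanest fix is to note that the only feature of the bound \eqref{eq:Hconst2} that differs from \eqref{eq:H const1} is exactly the term $-\tfrac{8}{f_0} + \tfrac{d}{f_0}\big(\tfrac{ae}{2}(3a-2) - 3ab + 2a + 2b\big)$ being replaced by $\tfrac{d}{f_0}\cdot\tfrac{ae(2-3a)}{2}$, and to see that this amounts precisely to adding $2^{d-1}e$ to the right-hand side of \eqref{eq:4.7}. So the substantive claim to verify is:
\begin{equation*}
16-16g+d(2ae-5a^2e+10ab+4ag-4a-4b)+9f_0-2f_1-4t_2 -t_{4} - \tfrac{9}{4}t_{3} - 2e \geq 0,
\end{equation*}
and then to carry this improvement through the same elementary manipulation that produced \eqref{eq:H const1} from \eqref{eq:4.7}. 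The inequality with the extra $-2e$ follows from Theorem \ref{Theorem 4.6} by adding $\sum (-\,(\sigma^{\star}C_0\text{-component})^2) = 2^{d-1}e = 2^{d-3}\cdot 4e$ to the previously used lower bound $\sum(-C_j^2) + \sum m(E_i)$, provided these $2^{d-1}$ curves are admissible in the theorem.

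The main obstacle is exactly this admissibility when $g \ge 1$: Theorem \ref{Theorem 4.6} is stated for rational curves from quotient singularities together with smooth elliptic curves, and a smooth curve of genus $g \ge 2$ of negative self-intersection is neither. I would handle this by invoking the general form of the logarithmic Miyaoka--Sakai / Hirzebruch inequality: for $K_Y$ nef, $3c_2(Y) - c_1^2(Y) \ge \sum_i m(E_i)$ over any disjoint configuration of curves supported on quotient-singularity resolutions, and more generally any smooth curve $B$ with $B^2 < 0$ and $2g(B) - 2 + \text{(something)} \ge 0$ contributes at least $-B^2$ (see Sakai, or the formulation in \cite{H2}); since $B^2 = -e \le -4 < 2 - 2g(B) = 2 - 2g$ forces $e \ge 4 > 2 - 2g$ only when $g \ge 0$, one checks the relevant arithmetic genus condition holds and the contribution $-B^2 = e$ is valid. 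Alternatively, and most safely, I would restrict the improved statement to what the cited theorem literally gives — treating the genus-$g$ curves via the orbifold Euler-characteristic bound that underlies Theorem \ref{Theorem 4.6} — and remark that in all cases the net effect is to add $2^{d-1}e$ to the right-hand side of \eqref{eq:4.7}. Once that single inequality is in hand, the rest is the same bookkeeping as in Theorem \ref{Theorem 4.7}: substitute $H(X,\mathcal{C}) = \big((2ab - a^2e)d - f_1\big)/f_0$, use Lemma \ref{eq:combinatorial equality1}(2), and collect terms to arrive at \eqref{eq:Hconst2}.
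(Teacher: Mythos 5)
Your strategy is the same as the paper's: enlarge the curve configuration fed into Theorem \ref{Theorem 4.6} by the $2^{d-1}$ curves of $Y$ lying over $C_0$, thereby strengthening \eqref{eq:4.7}, and then rerun the bookkeeping of Theorem \ref{Theorem 4.7}. Two concrete points need repair. First, your displayed ``substantive claim'' subtracts $2e$ from the left-hand side of \eqref{eq:4.7}, whereas the correct normalization is $2^{d-1}e = 2^{d-3}\cdot 4e$, so the subtracted term must be $4e$ (you write this correctly yourself two sentences later). This is not cosmetic: to delete the $-\tfrac{8}{f_0}$ in \eqref{eq:H const1} you must absorb the constant $16$ in \eqref{eq:4.7}, and $2e\ge 8$ fails to do so for $4\le e<8$; with $-2e$ your manipulation only yields $H(X,\mathcal C)\ge -\tfrac92+\tfrac{e-8}{f_0}+\cdots$, which is strictly weaker than \eqref{eq:Hconst2}. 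With $-4e\le -16$ the argument closes. Second, your assertion that Hirzebruch's formula gives $m(E)=-E^2=e$ for a single rational $(-e)$-curve is false: the formula of \cite[Page 144, (4)]{H2} (the one giving $m=\tfrac92$ for a $(-2)$-curve) yields $m(H)=2+e+\tfrac1e$, and this is what the paper uses, producing the extra term $4\bigl(2+e+\tfrac1e\bigr)\ge 17$. Since $2+e+\tfrac1e>e$, your smaller number still suffices once the $4e$ normalization is corrected, but it must be presented as a lower bound for the contribution, not as the value of $m$.

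On the genus issue you raise: the paper simply declares the components of $\sigma^{\star}C_0$ to be rational curves of self-intersection $-e$ and applies the $m$-formula, which is literally justified only for $g=0$ (each component is an \'etale cover of $C_0\cong C$, hence of genus at least $g$). You are right to flag this, but your proposed remedy --- an unspecified ``general form'' of the logarithmic Miyaoka--Sakai inequality, supported by an arithmetic-genus condition whose statement as written (``$B^2=-e\le -4<2-2g(B)$ forces $e\ge 4>2-2g$ only when $g\ge 0$'') does not parse --- is not a proof; if you go this route you must state and cite a precise inequality that admits smooth curves of genus $g\ge 1$ with negative self-intersection and quantify their contribution. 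Finally, your identification of the difference between \eqref{eq:H const1} and \eqref{eq:Hconst2} tacitly uses $b=ae$; this does follow from the hypothesis (disjointness from $C_0$ gives $C_i\cdot C_0=b-ae=0$), but it should be said explicitly, as the paper does when it substitutes $ae=b$ before simplifying.
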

\begin{proof}
As in the previous theorem, by Remark \ref{No.of curves}, 
the surface  $Y$ contains $2^{d-4}t_{3}$ disjoint rational $(-2)$-curves $E_i$
(above the $3$-points), $2^{d-5}t_{4}$ elliptic
curves $C_j$ (above the $4$-points), each of self-intersection
$-4$. Further, since 
the curves in the arrangement do not intersect $C_0$, the surface
$\tilde{X}$ has an isomorphic copy of $C_0$. Hence 
$Y$ contains $2^{d-1}$ copies of a rational curve $H$ of
self-intersection $-e.$

Hirzebruch gives a formula to compute the value $m(H)$ in \cite[Page
144, (4)]{H2}. Applying this formula, we have that for  rational
curves  $H$ of self-intersection $-e$, $m(H) = 2+e+\frac{1}{e}.$

By Theorem \ref{Theorem 4.5}, $K_Y$ is nef. Thus,  by Theorem \ref{Theorem 4.6}, the inequality in \eqref {eq:Miyaoka-Sakai} is satisfied.

From \eqref{3e(Y)-c_1^2(Y)}, we have,
\begin{align*}
\frac{1}{2^{d-3}}(3e(Y)- c_1^2(Y))=16-16g+d[(2b-ae)(5a-2)+4a(g-1)]+9f_0-2f_1-4t_2.
\end{align*}

We have 
\begin{align*}
\sum m(E_i) + \sum m(H) &=\frac{9}{2}2^{d-4}t_{3}+2^{d-1}(2+e+\frac{1}{e}), \text{~and} & \\
\sum (-C_j^2)&= 4t_42^{d-5}.
\end{align*}
Plugging these values in  \eqref{eq:Miyaoka-Sakai} and simplifying, we have:
\begin{equation}\label{eq:4.10}
16-16g+d(2ae-5a^2e+10ab+4ag-4a-4b)+9f_0-2f_1-4t_2 -t_{4} - \frac{9}{4}t_{3}-4(2+e+\frac{1}{e})\geq 0.
\end{equation}
Simplifying \eqref {eq:4.10}, with $ae=b,$ we arrive at the following modified Hirzebruch-type  inequality for $\mathcal{C}$ :
\begin{equation}\label{eq:hirz}
t_{2}+\frac{3}{4}t_{3} \geq 4(e+\frac{1}{e})-8+16g+\sum_{k\geq5}(2k-9)t_{k}+d(-5a^2e+2ae-4ag+4a).
\end{equation}
Since $e\geq 4$, we have $4(e+\frac{1}{e}) \geq 17$. So
\eqref {eq:hirz} becomes:
\begin{equation}\label{eq:eq 4.12}
t_{2}+\frac{3}{4}t_{3} \geq 9+16g+\sum_{k\geq5}(2k-9)t_{k}+d\left(-5a^2e+2ae-4ag+4a\right).
\end{equation}
From the above inequality \eqref {eq:eq 4.12}, we have $$-f_{1} \geq \frac{9+16g+d\left(e(2a-5a^2)-4ag+4a\right)-9f_{0}+4t_{2}+\frac{9}{4}t_{3}+t_{4}}{2}.$$

We now bound the $H$-constant $H(X,\mathcal{C})$. 
\begin{align*}
H(X,\mathcal{C}) &\geq \frac{d(-a^2e+2ab)+8g+\frac{d(e(2a-
		5a^2)-4ag+4a)-9f_{0}+9}{2}+2t_{2}+\frac{9}{8}t_{3}+\frac{t_{4}}{2}}{f_{0}}\\
&\geq \frac{-9}{2}+\frac{d}{f_{0}}\left(\frac{-7a^2e}{2}+2ab+ae-2ag+2a\right)+\frac{16g+4t_2+t_4}{2f_0}+\frac{9t_3}{8f_0}.
\end{align*}
Since $ae=b$, we get 
$$H(X,\mathcal{C}) \geq
\frac{-9}{2}+\frac{d}{f_{0}}\left(\frac{ae(2-3a)-4a(g-1)}{2}\right)+\frac{16g+4t_2+t_4}{2f_0}+\frac{9t_3}{8f_0},$$
as required.
\end{proof}

We now define the $H$-constant of a ruled surface for a fixed pair of
integers $a,b$ as follows. 

\begin{definition}\label{global-harbourne}
Let $X$ be a ruled surface with invariant $e \ge 4$. Let $a > 0$ and
$b \ge ae$ be positive integers. 
	We define the $H$-constant $H_{a,b}(X)$ of $X$ as :
	$$H_{a,b}(X):=\inf_{\mathcal{C}} H(X,\mathcal{C}),$$
	 where the infimum is over all transversal arrangements $\mathcal{C}$ satisfying Assumption \ref{extra-assumption}.
\end{definition}

In order to bound the constant $H_{a,b}(X)$, we make the following
observation. 
\begin{lemma}\label{Lemma 4.10}
	Let $\mathcal{C}=\{C_1,C_2,\ldots,C_d\}$ be a transversal arrangement on the ruled surface $X$ satisfying Assumption \ref{extra-assumption}. Then $f_0\geq d.$ 
\end{lemma}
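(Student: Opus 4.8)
The statement to prove is Lemma 4.10: for a transversal arrangement $\mathcal{C} = \{C_1,\dots,C_d\}$ on $X$ satisfying Assumption \ref{extra-assumption}, we have $f_0 \geq d$, i.e.\ the number of singular points of $\mathcal{C}$ is at least the number of curves. My plan is to extract this from the combinatorial identities already established, together with the transversality hypothesis and the bound $e \geq 4$.

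First I would recall from Lemma \ref{eq:combinatorial equality1}(1) that each curve $C_j$ carries exactly $(2ab - a^2e)(d-1)$ ``incidence units'' $\sum_{p\in C_j}(r_p - 1)$, and that since $\mathcal{C}$ is transversal, any two curves meet in $C_i\cdot C_j = 2ab - a^2e$ distinct points. Since $b \geq ae$ and $e \geq 4$, we get $2ab - a^2e \geq a^2e \geq 4a^2 \geq 4$, so $C_i\cdot C_j \geq 2$ in particular; in fact $C_i \cdot C_j$ is large. The key point is that the singular points lying on a single curve $C_j$ are at least as numerous as $(2ab-a^2e)$, hence at least as numerous as $d-1$ whenever the maximum multiplicity $k$ on $C_j$ satisfies $k \leq d-1$ — and this follows from $t_d = 0$ in Assumption \ref{star}. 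Concretely, reusing the estimate from the proof of Lemma \ref{eq:Lemma 4.2}, $f_0^j = \tfrac{t_2^j}{2} + t_3^j + \dots + t_k^j \geq \tfrac{(2ab-a^2e)(d-1)}{k} \geq 2ab - a^2e \geq d-1$ (since $k \le d-1$), where $f_0^j$ is the number of multiple points on $C_j$. But this only gives $f_0^j \ge d-1$ on a single curve; I need $f_0 \geq d$ globally.

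To pass from the per-curve bound to the global bound $f_0 \geq d$, I would argue as follows. Fix one curve, say $C_1$. By the above, there are at least $d - 1$ singular points of $\mathcal{C}$ lying on $C_1$. If there is even one singular point of $\mathcal{C}$ not lying on $C_1$, we are done: $f_0 \geq (d-1) + 1 = d$. So suppose every singular point of $\mathcal{C}$ lies on $C_1$. Then in particular every point of $C_i \cap C_j$ for $i, j \geq 2$ lies on $C_1$, forcing each such point to be a point of multiplicity $\geq 3$; moreover $C_2 \cap C_3 \subseteq C_1$, etc. In this degenerate situation I would count more carefully: the number of singular points on $C_1$ is $f_0^1 = \tfrac{t_2^1}{2} + t_3^1 + \dots$, and I can sharpen the inequality $f_0^1 \geq 2ab - a^2e$ to $f_0^1 \geq d$ by using that $2ab - a^2e \geq a^2 e \geq 4$ together with the fact that $d - 1 \geq 3$ (Assumption \ref{star} gives $d \geq 4$) and $k \leq d-1$, so actually $\tfrac{(2ab-a^2e)(d-1)}{k} \geq (2ab - a^2e)\cdot\tfrac{d-1}{d-1} \cdot$ — here I should instead compare $2ab - a^2e \geq d$ directly: since $a \geq 1$ and $b \geq ae \geq 4a$, one has $2ab - a^2e \geq a^2e \geq 4$, which is $\geq d$ only if $d \leq 4$. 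So the truly clean route is: either a singular point lies off $C_1$ (giving $f_0 \geq d$ immediately), or all singular points lie on $C_1$, in which case I use Assumption \ref{extra-assumption} — if $a = 1$ there is a $4$-element subset with empty common intersection, contradicting ``all singular points on $C_1$'' when $d$ is moderate — or a direct double-counting of incidences restricted to $C_1$.

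The main obstacle I anticipate is precisely this passage from the per-curve estimate to the global one: the naive bound $f_0 \geq f_0^1 \geq 2ab - a^2e$ need not exceed $d$ when $d$ is large relative to $a$ and $e$. The cleanest fix is the dichotomy above — either some singular point avoids $C_1$, adding $1$ to the $d-1$ already found on $C_1$, or the arrangement is so special that every pairwise intersection of the other curves is pinned to $C_1$, and then a short incidence count (each of the $\binom{d-1}{2}$ pairs among $C_2,\dots,C_d$ contributes a point on $C_1$, all of multiplicity $\geq 3$, hence distinct points are few but each absorbs many pairs, yet the pigeonhole with $k \leq d-1$ still forces $f_0^1 \geq d$) closes the gap. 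I would present the dichotomy cleanly and relegate the special-case count to one or two lines.
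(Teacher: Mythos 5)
There is a genuine gap, and it sits exactly where you sensed trouble. Your argument hinges on the claim that a single curve $C_1$ already carries at least $d-1$ singular points of the arrangement. But the incidence count you quote from the proof of Lemma \ref{eq:Lemma 4.2} only gives $f_0^1 \geq \frac{h(d-1)}{k}$ with $h := 2ab-a^2e$ and $k \le d-1$, hence only $f_0^1 \geq h$ (and with $t_d=0$, slightly better: $f_0^1 \ge \frac{h(d-1)}{d-2} > h$, so $f_0^1 \geq h+1$). The quantity $h$ is fixed by $a,b,e$ and does not grow with $d$, so for $d \gg h$ the bound $f_0^1 \geq d-1$ is simply not available; you acknowledge this ("which is $\geq d$ only if $d \le 4$") but the proposed repair does not work. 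Your dichotomy needs the $d-1$ points on $C_1$ in its first branch (one extra point off $C_1$ only yields $f_0 \geq f_0^1+1 \geq h+2$, not $\geq d$), and the closing assertion that "the pigeonhole with $k \le d-1$ still forces $f_0^1 \geq d$" in the second branch is stated without proof and is false as a per-curve statement. No amount of counting on a single curve can prove the lemma, because the obstruction is global.

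The paper's proof is a Fisher-inequality-type linear algebra argument: to each $C_i$ one attaches the $0/1$ incidence vector $v_i \in \rationals^{f_0}$ recording which singular points lie on $C_i$. Transversality gives $v_i\cdot v_j = h$ for $i\neq j$, and $t_d=0$ gives $v_i\cdot v_i > h$; one then shows $v_1,\dots,v_d$ are linearly independent (a dependence $v_1=\sum_{j\ge 2}a_jv_j$ forces every $a_j<0$, impossible for nonnegative vectors), whence $d \le f_0$. This is the global input your local counting cannot supply. If you want to salvage something from your approach, the second branch of your dichotomy (all singular points on $C_1$) does lead to a contradiction by comparing $\sum_p\binom{r_p-1}{2}\ge h\binom{d-1}{2}$ with $\sum_p(r_p-1)=h(d-1)$ and $r_p\le d-1$, but the first branch remains broken, so the argument as a whole does not close.
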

\begin{proof}
This is proved in \cite[Lemma 6.1]{E}. We write the proof here for the
convenience of the reader.

Let $s=f_0$ and $h=2ab-a^2e.$ Let $\text{Sing}(\mathcal{C})=\{p_1,\ldots,p_s\}$. 
Consider the $\rationals-$vector space $\mathbb{Q}^s$ with the usual
dot product: if $v=(a_1,\ldots,a_s)$ and $u = (b_1,\ldots,b_s)$, then
$v\cdot u: = a_1b_1+\ldots+a_sb_s$. 
	
For every curve $C_i \in \mathcal{C}$, we associate a vector 
$v_i \in \rationals^s$ by setting the $l$-th entry of $v_i$ equal to
1, if  $C_i$ passes through $p_l$, and 0 otherwise.

Note that if $i\neq j$, then $v_i\cdot v_j$ is precisely the number of
points common to $C_i$ and $C_j.$ 
By our hypothesis, we have $v_i\cdot v_j=h$.
Also  $v_i\cdot v_i$ is the number of multiple points that are contained in $C_i$.

We claim that each curve $C_i$ contains at least $h+1$ intersection
points with other curves in the arrangement. 
Since there are at least two curves in $\mathcal{C}$, we have $v_i \cdot v_i \ge h$. 
If $v_i \cdot v_i = h$, then all the curves in $\mathcal{C}$ intersect
$C_i$ in the same $h$ points. This contradicts the assumption $t_d =
0$.  Thus $v_i\cdot v_i>h$ for all $i$.

To prove the lemma, it suffices to show that the set
$\{v_1,v_2,\ldots,v_d\}$ is linearly independent. 
If it is not linearly independent, without loss of generality, let 
$v_1=\sum_{j=2}^{d}a_jv_j$ for $a_j \in \mathbb{Q}.$
	
Consider $v_1\cdot (v_1-v_q)$ where $q\geq 2.$
Then 
\begin{align*}
(v_1\cdot v_1)-h&=v_1\cdot (v_1-v_q)\\
&=\left(\sum_{j=2}^{d}a_jv_j\right) \cdot (v_1-v_q)\\&=\sum_{j=2}^{d}a_j\Big{(}h-(v_j\cdot v_q)\Big{)}\\&=a_q\left(h-(v_q\cdot v_q)\right)
\end{align*}
	
So $a_q=\frac{(v_1\cdot v_1)-h}{h-(v_q\cdot v_q)} < 0$. Since this
holds for all $q \ge 2$, $v_1$ is a linear combination of
$v_2,\ldots,v_d$ with negative coefficients. 
But the entries of $v_i$ for any $i=1,\ldots,d$ are either 0 or 1 and we
obtain the required contradiction. 
\end{proof}
\begin{corollary}\label{main-corollary} Let $X$ be a ruled surface over a smooth curve of
  genus $g$ with invariant $e \ge 4$. Let $a > 0$ and $b > ae$ be positive integers. Then 
		\begin{equation}\label{eq:eq 4.13}
		H_{a,b}(X) \geq \frac{-11}{2}+\frac{(ae-2b)}{2}(3a-2)-2ag.
		\end{equation}
		Further, if $ae= b,$ then 
		\begin{equation}\label{eq:eq 4.14}
		H_{a,b}(X) \geq \frac{-9}{2}+\frac{ae(2-3a)-4ag}{2}.
		\end{equation}
\end{corollary}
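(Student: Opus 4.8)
The plan is to derive the two inequalities directly from the bounds on $H(X,\mathcal{C})$ established in Theorem \ref{Theorem 4.7} and Proposition \ref{Prop 3.6} by taking the infimum over all admissible arrangements $\mathcal{C}$, and the only extra ingredient needed is Lemma \ref{Lemma 4.10}, which guarantees $f_0 \ge d$. First I would recall from the proof of Theorem \ref{Theorem 4.7} the bound
\begin{equation*}
H(X,\mathcal{C}) \geq \frac{-9}{2}-\frac{8}{f_{0}}+\frac{d}{f_{0}}\left(\frac{(ae-2b)}{2}(3a-2)-2a(g-1)\right)+\frac{16g+4t_2+t_4}{2f_0}+\frac{9t_3}{8f_0}.
\end{equation*}
Since $e \ge 4$, $a \ge 1$ and $b > ae$, the quantity $\frac{(ae-2b)}{2}(3a-2)$ is negative (indeed $ae - 2b < 0$ and $3a - 2 \ge 1$), and $-2a(g-1)$ could be negative when $g = 0$; in any case, because the coefficient $C := \frac{(ae-2b)}{2}(3a-2)-2a(g-1)$ multiplying $d/f_0$ may be negative, I want to replace $d/f_0$ by its \emph{largest} possible value to get a valid lower bound, and by Lemma \ref{Lemma 4.10} we have $0 < d/f_0 \le 1$. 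The nonnegative terms $\frac{16g+4t_2+t_4}{2f_0}$ and $\frac{9t_3}{8f_0}$ can simply be dropped. This leaves
\begin{equation*}
H(X,\mathcal{C}) \ge \frac{-9}{2} - \frac{8}{f_0} + \frac{d}{f_0}\,C \ge \frac{-9}{2} - \frac{8}{f_0} + C,
\end{equation*}
using $d/f_0 \le 1$ together with $C < 0$. The remaining step is to bound $-8/f_0$; since $f_0 = s \ge 1$ (in fact Lemma \ref{Lemma 4.10} gives $f_0 \ge d \ge 4$), one gets $-8/f_0 \ge -8/d \ge -2$ using $d \ge 4$, hence $H(X,\mathcal{C}) \ge \frac{-9}{2} - 2 + C = \frac{-11}{2} + C$, which is exactly \eqref{eq:eq 4.13}. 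Taking the infimum over $\mathcal{C}$ yields the claimed bound for $H_{a,b}(X)$.

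For the second inequality \eqref{eq:eq 4.14}, I would repeat the argument using instead the improved bound of Proposition \ref{Prop 3.6} — but here one must be careful: Proposition \ref{Prop 3.6} was proved under the extra hypothesis that no curve of $\mathcal{C}$ meets $C_0$, which forces $ae = b$. Under the assumption $ae = b$ in the corollary, I would rework the computation in the proof of Theorem \ref{Theorem 4.7} without the "$C_0$-disjoint" hypothesis but simply specializing $b = ae$; alternatively, and more cleanly, I would observe that when $b = ae$ the term $\frac{-8}{f_0}$ is absorbed differently — in fact the cleanest route is to note that the bound in \eqref{eq:Hconst2} from Proposition \ref{Prop 3.6} is $H(X,\mathcal{C}) \ge \frac{-9}{2}+\frac{d}{f_{0}}\left(\frac{ae(2-3a)-4a(g-1)}{2}\right)+(\text{nonneg.})$, and the coefficient $\frac{ae(2-3a)-4a(g-1)}{2}$ of $d/f_0$ is again negative for $a \ge 1$, $e \ge 4$ (since $2 - 3a \le -1$), so replacing $d/f_0$ by $1$ and dropping nonnegative terms gives $H(X,\mathcal{C}) \ge \frac{-9}{2} + \frac{ae(2-3a) - 4a(g-1)}{2} = \frac{-9}{2} + \frac{ae(2-3a) - 4ag}{2} + 2a$; one then checks $2a \ge 0$ can be dropped to land on \eqref{eq:eq 4.14}. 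Taking the infimum over $\mathcal{C}$ completes the proof.

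The main obstacle I anticipate is a bookkeeping one rather than a conceptual one: making sure the sign of the coefficient of $d/f_0$ is correctly tracked in every case (particularly for $g = 0$, $a = 1$, where $-2a(g-1) = 2 > 0$ and so one must split off the strictly negative part $\frac{(ae-2b)}{2}(3a-2)$ from the possibly positive part before deciding whether to push $d/f_0$ up to $1$ or down toward $0$), and similarly checking that the hypothesis $b > ae$ (strict, for \eqref{eq:eq 4.13}) versus $b = ae$ (for \eqref{eq:eq 4.14}) is used consistently. One subtlety worth flagging is that in the $g=0$, $a=1$ case the full coefficient $C = \frac{(e-2b)}{2} - 2(0-1) = \frac{e-2b}{2} + 2$; with $b > e$ this is $< \frac{e - 2e}{2} + 2 = 2 - e/2 \le 0$ since $e \ge 4$, so $C < 0$ still holds and the argument goes through uniformly — but this is exactly the kind of edge case that must be verified rather than assumed. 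I would present the two inequalities as two short computations, each opening by quoting the relevant earlier bound, invoking Lemma \ref{Lemma 4.10} to bound $d/f_0 \le 1$ and $f_0 \ge d \ge 4$, and then taking the infimum.
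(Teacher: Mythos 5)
Your overall strategy is the same as the paper's: quote the bounds from Theorem \ref{Theorem 4.7} and Proposition \ref{Prop 3.6}, observe that the coefficient of $d/f_0$ is negative so that Lemma \ref{Lemma 4.10} lets you replace $d/f_0$ by $1$, drop the manifestly nonnegative terms, and control $-8/f_0$. Your treatment of the second inequality \eqref{eq:eq 4.14} is fine, and your worry about the hypothesis of Proposition \ref{Prop 3.6} is harmless: since $C_i\cdot C_0=b-ae$ and $C_i\neq C_0$, the condition $b=ae$ is in fact equivalent to the curves being disjoint from $C_0$.

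The first inequality, however, contains a concrete error as written. You bound $-8/f_0\geq -2$ from $f_0\geq d\geq 4$ and then assert $\frac{-9}{2}-2+C=\frac{-11}{2}+C$; but $\frac{-9}{2}-2=\frac{-13}{2}$, and moreover $-\frac{11}{2}+C$ is not ``exactly'' \eqref{eq:eq 4.13}, since your $C=\frac{(ae-2b)}{2}(3a-2)-2a(g-1)$ differs from the coefficient displayed in the corollary by $+2a$. The paper closes this gap with an ingredient you are missing: it first proves the auxiliary claim $f_0>2ab-a^2e+1$ (by combining Lemma \ref{eq:combinatorial equality1}(2), the bound $k\le d-1$, and Lemma \ref{Lemma 4.10}), whence $f_0\geq 2a(ae+1)-a^2e+2\geq 4a^2+2a+2\geq 8$ and so $-8/f_0\geq -1$; combined with discarding the positive term $\frac{d}{f_0}\cdot 2a$, this yields exactly the constant $-\frac{11}{2}$. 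As it happens, your two slips compensate: the correct computation along your route gives $-\frac{13}{2}+C=-\frac{13}{2}+2a+\frac{(ae-2b)}{2}(3a-2)-2ag\geq-\frac{9}{2}+\frac{(ae-2b)}{2}(3a-2)-2ag$, using $a\geq 1$, which is even stronger than \eqref{eq:eq 4.13}. So your approach is salvageable without the paper's $f_0\geq 8$ claim --- but only if you retain the $+2a$ explicitly and do the arithmetic correctly, neither of which your write-up does.
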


\begin{proof} 
We first claim that $f_0>2ab-a^2e+1.$ Indeed, if not, $f_0\leq2ab-a^2e+1.$
Then 
\begin{eqnarray*}
(2ab-a^2e)d(d-1)&=&\sum_{k\geq2}k(k-1)t_{k}, \text{by Lemma \ref{eq:combinatorial equality1}(2)}\\
&\leq&(d-1)(d-2)f_0, \text{~since~} k \le d-1\\
 &\leq & (d-1)(d-2)(2ab-a^2e+1).
\end{eqnarray*}	 

This gives 
\begin{eqnarray*}
 (2ab-a^2e)d&\leq&(d-2)(2ab-a^2e+1) \\
\Rightarrow ~2(2ab-a^2e)&\leq&(d-2)\\
\Rightarrow ~2(d-1)&\leq& 2(2ab-a^2e)\leq(d-2), \text{~by Lemma
                             \ref{Lemma 4.10}}\\
\Rightarrow ~d & \le & 0. 
\end{eqnarray*}

This is a contradiction and the claim follows.
	 
Now, since $b>ae$, $e\geq4$ and $a>0$ by our assumptions, the claim gives 
$f_0\geq2ab-a^2e+2\geq2a(ae+1)-a^2e+2\geq4a^2+2a+2\geq8.$ Thus $f_0\geq8$ and hence $\frac{-8}{f_0}\geq-1.$
	
By Theorem \ref{Theorem 4.7}, we have $H(X,\mathcal{C}) \geq
\frac{-9}{2}-\frac{8}{f_{0}}+\frac{d}{f_{0}}\left(\frac{(ae-2b)}{2}(3a-2)-2a(g-1)\right)$. 
Note that $\frac{(ae-2b)}{2}(3a-2)-2ag$ is a negative number as $b>ae.$ Hence, as 
	$\frac{-8}{f_0}\geq-1$, Lemma \ref{Lemma 4.10} gives \eqref{eq:eq 4.13}.
	
	Similarly, by Proposition \ref{Prop 3.6}, we have
$H(X,\mathcal{C}) \geq
\frac{-9}{2}+\frac{d}{f_{0}}\left(\frac{ae(2-3a)-4a(g-1)}{2}\right)$. 
Since $\frac{ae(2-3a)-4ag}{2}$ is a negative number, Lemma \ref{Lemma 4.10} gives \eqref{eq:eq 4.14}.
\end{proof}
We now state a corollary which gives a lower bound on the
self-intersection of the strict
transform of the divisor associated to an arrangement of curves. 
\begin{corollary}\label{Cor 4.12}
	Let $\mathcal{C}$ be a transversal arrangement on
	the ruled surface $X$ satisfying  Assumption \ref {extra-assumption}. Let 
	$f: \widetilde{X} \to X$ be the blow-up of X at  $\text{Sing}
        (\mathcal{C})$. Let $\widetilde{D}$ denote the strict
        transform of $D$, which is the divisor defined as the sum of all the curves in $\mathcal{C}$. Then
	\begin{equation*}
	 \widetilde{D}^2 \geq-8-\frac{9}{2}s+d\left(\frac{(ae-2b)}{2}(3a-2)-2a(g-1)\right)+8g+2t_2+\frac{t_4}{2}+\frac{9t_3}{8}.
	\end{equation*}
	Further, if all curves in the arrangement do not intersect
        the normalized section $C_0,$ then
	\begin{equation*}
	\widetilde{D}^2 \geq \frac{-9}{2}s+d\left(\frac{ae(2-3a)-4a(g-1)}{2}\right)+8g+2t_2+\frac{t_4}{2}+\frac{9t_3}{8}.
	\end{equation*} 	
\end{corollary}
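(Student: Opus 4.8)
The plan is to derive Corollary \ref{Cor 4.12} directly from the Harbourne constant bounds already established, namely Theorem \ref{Theorem 4.7} and Proposition \ref{Prop 3.6}, by unwinding the definition of $H(X,\mathcal{C})$. Recall from Definition \ref{def:H-constant TA} that $H(X,\mathcal{C}) = \frac{1}{s}\left(D^2 - \sum_{P \in \mathrm{Sing}(D)} r_P^2\right)$, where $s = f_0$ is the number of singular points of $\mathcal{C}$. The key identification is that $\widetilde{D}$, the strict transform of $D$ under the blow-up $f : \widetilde{X} \to X$ of \emph{all} points of $\mathrm{Sing}(\mathcal{C})$, satisfies $\widetilde{D}^2 = D^2 - \sum_{P \in \mathrm{Sing}(\mathcal{C})} r_P^2$, since blowing up a point of multiplicity $r_P$ on $D$ drops the self-intersection by $r_P^2$ (here we use that $\mathcal{C}$ is transversal, so all local intersection multiplicities are simple and the multiplicity of $D$ at $P$ is exactly $r_P$). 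Combining these two facts gives the clean identity $\widetilde{D}^2 = s \cdot H(X,\mathcal{C}) = f_0 \cdot H(X,\mathcal{C})$.

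From here the proof is a two-line substitution. First I would multiply the inequality of Theorem \ref{Theorem 4.7} through by $f_0$: since
\[
H(X,\mathcal{C}) \geq \frac{-9}{2}-\frac{8}{f_{0}}+\frac{d}{f_{0}}\left(\frac{(ae-2b)}{2}(3a-2)-2a(g-1)\right)+\frac{16g+4t_2+t_4}{2f_0}+\frac{9t_3}{8f_0},
\]
multiplying by $f_0 = s > 0$ yields exactly
\[
\widetilde{D}^2 = f_0 H(X,\mathcal{C}) \geq -8 - \frac{9}{2}s + d\left(\frac{(ae-2b)}{2}(3a-2) - 2a(g-1)\right) + 8g + 2t_2 + \frac{t_4}{2} + \frac{9t_3}{8},
\]
which is the first claimed bound. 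For the second statement, under the extra hypothesis that no curve of $\mathcal{C}$ meets the normalized section $C_0$, I would instead invoke Proposition \ref{Prop 3.6} and multiply its bound through by $f_0 = s$ in exactly the same way; the term $-\frac{8}{f_0}$ is absent there, so no $-8$ appears, giving
\[
\widetilde{D}^2 \geq \frac{-9}{2}s + d\left(\frac{ae(2-3a) - 4a(g-1)}{2}\right) + 8g + 2t_2 + \frac{t_4}{2} + \frac{9t_3}{8}.
\]

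There is essentially no obstacle here: the corollary is a formal consequence of the preceding results once the identity $\widetilde{D}^2 = s\,H(X,\mathcal{C})$ is observed, and this identity is immediate from the definition of the Harbourne constant together with the standard behaviour of self-intersection under blow-up at points of a reduced divisor. The only point requiring a word of justification is that $f$ blows up \emph{all} of $\mathrm{Sing}(\mathcal{C})$ (including the $2$-fold points), whereas the map $\tau$ used in Section \ref{abelian-cover} blew up only the points of multiplicity $\geq 3$; but this is harmless since the defining formula for $H(X,\mathcal{C})$ already sums $r_P^2$ over \emph{all} singular points $P$, matching the strict transform under $f$ exactly. Thus the statement follows directly, and the proof is complete.
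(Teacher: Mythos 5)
Your proposal is correct and coincides with the paper's own proof, which likewise just observes that $f_0=s$ and $\widetilde{D}^2=s\,H(X,\mathcal{C})$ and then multiplies the bounds of Theorem \ref{Theorem 4.7} and Proposition \ref{Prop 3.6} by $s$. Your additional remark justifying why the blow-up of all of $\text{Sing}(\mathcal{C})$ (including the double points) matches the defining formula for the Harbourne constant is a harmless elaboration of what the paper leaves implicit.
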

\begin{proof}
	Indeed, note that $f_0=s$ and $\widetilde{D}^2=sH(X,\mathcal{C}).$ The corollary now follows from \eqref{eq:H const1} and \eqref{eq:Hconst2}.
\end{proof}

\subsection{Examples} 
It is not easy to construct arrangements which have small Harbourne
constants. 
Most easy to construct examples of curve arrangements have much larger Harbourne
constants than our bounds predict. For example, if $\mathcal{C} =
\{C_1,\ldots, C_d\}$ is a
general arrangement of curves on a ruled surface $X$ satisfying our
assumptions, then it is easy to see that $H(X,\mathcal{C}) =
\frac{-2(d-2)}{d-1}$. Indeed, all singular points of $\mathcal{C}$
have multiplicity 2 and consequently, $t_2  = \binom{d}{2}C_1^2$ and $t_k =
0$ for $k \ge 3$. Now an easy calculation gives 
$H(X,\mathcal{C}) =\frac{-2(d-2)}{d-1}$. But this value is much larger
than the bounds given by our main results Theorem \ref{Theorem 4.7} or
Corollary \ref{main-corollary}. 

This situation is analogous to the case of line arrangements in
$\proj^2$. The best bound we have in this case is given in
\cite[Theorem 3.3]{B1} which proves that
$H(\proj^2, \mathcal{L}) > -4$ for all line arrangements
$\mathcal{L}$. But for a general line arrangement or for many
simple examples, the Harbourne constant is at least $-2$. However, there do exist line
arrangements in the plane which have small Harbourne constants. We can
use these to obtain fairly small Harbourne constants for curve
arrangements on ruled surfaces. We illustrate this with two examples
below. 

\begin{example}
Let $X = X_e$ be a rational ruled surface with invariant $e \ge 1$. 
Given a line arrangement in $\proj^2$, one can obtain an arrangement of
curves on $X_e$, following a construction outlined in \cite[Example
15]{E}, where a specific finite morphism $X_e \to X_1$ of degree $e$ is described. Note
that $X_1$ is isomorphic to a blow up of $\proj^2$ at a point. So we
can pull-back lines in $\proj^2$ to $X_e$ which are in the class 
$(1,e)$. If $\mathcal{L}$ is a line arrangement of $d$ lines in the plane, its
pull-back gives a curve arrangement $\mathcal{C}$ of $d$ curves in
$X_e$. 

To be more precise, suppose that $\mathcal{L}$ has $s$ singularities 
and $t_k$ denotes the number of singular points of $\mathcal{L}$ of
multiplicity $k$. Then the singular points of $\mathcal{C}$ are
precisely the pre-images of singularities of $\mathcal{L}$. So 
$\mathcal{C}$  has $es$ singular points and the
number of singular points of multiplicity $k$ is $et_k$. Note that
each curve in $\mathcal{C}$ is in the class $(1,e)$ and has
self-intersection $e$. So the
self-intersection of the divisor associated to $\mathcal{C}$  is
$d^2e$. 

Hence we have $$H(X,\mathcal{C}) = \frac{d^2e-e\sum \limits_{p\in \text{Sing}(\mathcal{L})} r_p^2}{se} =
\frac{d^2-\sum\limits_{p\in \text{Sing}(\mathcal{L})}  r_p^2}{s} =  H(\proj^2, \mathcal{L}).$$

We now assume $e \ge 4$. 
First we consider the Klein arrangement \cite{K}, denoted by $\mathcal{L}_1$. 
This arrangement consists of 21 lines with $t_3 = 28, t_4 = 21$ and
$t_k = 0$ for $k \ne 3,4$. It is easy to see that $H(\proj^2,
\mathcal{L}_1) = -3$. So if $\mathcal{C}_1$ is the curve arrangement in $X$
obtained from $\mathcal{L}_1$, then $H(X,\mathcal{C}_1)=-3$.

Now we calculate the bound given by Proposition \ref{Prop 3.6}. 
(Note that since $ae=b$, this bound is better than the one given by 
 Theorem \ref{Theorem 4.7}.)
We
have $d=21, f_0 = 49e, a=1, b=e, g=0, t_2 = 0, t_3 = 28e, t_4 =
21e$. So Proposition \ref{Prop 3.6} gives 
$$H(X,\mathcal{C}_1) \ge \frac{-9}{2}+
\frac{21}{49e}\left(\frac{4-e}{2}\right)+\frac{21e}{98e}+\frac{9(28)}{8(49)} = \frac{42}{49e}-3.857.$$

Next let $\mathcal{L}_2$  denote the Wiman configuration \cite{W}.  This arrangement consists of 45 lines with $t_3 =
120, t_4 = 45, t_5=36$ and
$t_k = 0$ for $k \ne 3,4, 5$. It is easy to check that
$H(X,\mathcal{C}_2) = -3.359$, where $\mathcal{C}_2$ is the
arrangement of curves in $X$ given by  $\mathcal{L}_2$. 

As above, using Proposition \ref{Prop 3.6}, we obtain
$$H(X,\mathcal{C}_2) \ge \frac{-9}{2}+
\frac{45}{201e}\left(\frac{4-e}{2}\right)+\frac{45e}{402e}+\frac{9(120)}{8(201)} = \frac{90}{201e}-3.828.$$

\end{example}

\section{Ball quotients}\label{ball-quotient}
\textit{Ball quotients} are algebraic surfaces for which the universal cover is
the 2-dimensional unit ball. Equivalently, ball quotients are minimal
smooth complex projective surfaces $Y$ 
of general type satisfying equality in the Bogomolov-Miyaoka-Yau
inequality. In other words, they are minimal
smooth complex projective surfaces $Y$ such that
$K_Y$ is nef and big and $K_Y^2 = 3e(Y),$
where $K_Y$ denotes the canonical divisor and $e(Y)$ is the topological Euler characteristic.
See \cite{T} for more details on ball quotients.

Hirzebruch \cite{H2} gave examples of ball
quotients using line arrangements in $\proj^2$. 
To a line arrangement in $\proj^2$, he associated a surface $Y$ (by
first an abelian cover of $\proj^2$ branched on that line arrangement and
then taking a desingularization). 
He  exhibited three specific line arrangements whose associated surfaces
$Y$ are ball quotients. 

In this section, we show that the surfaces associated to transversal
arrangements on ruled surfaces that we consider in this paper are not ball quotients. 
In order to do this, we use the theory of constantly branched covers developed in
\cite{BHH}. 
The crucial idea is the following. Let $Y$ be a ball quotient 
which arises from the abelian cover construction we used in 
Section \ref{abelian-cover}.  Then if
$E$ is a curve contained in the 
ramification divisor of $\sigma: Y \to \widetilde{X}$, 
then the \textit{relative proportionality}  of $E$ is zero. This is
defined as $\text{prop}(E) := 2E^{2} - e(E)$. 
For more details, see \cite[Section 1.3]{BHH}. See also \cite{H3} for
a nice introduction.  In the notation of \cite{H3}, one says that $Y$
is a \textit{good covering} of $\widetilde{X}$ via $\sigma$.

The same method was used in \cite{P1} and \cite{P2} to study ball quotients.

Let $X$ be a ruled surface with $e \ge 4$. 
Let $\mathcal{C}=\{C_1, C_2, \ldots, C_d\}$ be a transversal arrangement of curves on
the ruled surface $X$ satisfying  Assumption \ref {extra-assumption}.
Let $Y$ be the associated surface constructed in 
Section \ref{abelian-cover}; see Figure \ref{dia:diagram1}. By Theorem \ref{Theorem 4.5}, $K_Y$ is nef and
consequently, $Y$ is a minimal surface of non-negative Kodaira
dimension. In fact, $Y$ is a surface of general type most of the time
as the following remark shows.  

\begin{remark}
Let $\mathcal{C}$ be a transversal arrangement on
the ruled surface $X$ satisfying  Assumption \ref
{extra-assumption}. Assume in addition that $a \ge 8$. 
By \eqref{eq:c_1^2(Y)}, we have 
\begin{align*}
K_Y^2=2^{d-3}\left(32+(8ad-32)g+d(a(2b-ae)+4a(e-2)-8b)+5f_1-9f_0+t_{2}\right).
\end{align*}
Using $a\ge 8$ and Assumption \ref{extra-assumption},  
it is easy to see that  $K_Y^2 >0.$	
Thus $Y$ is a minimal surface of general type.
\end{remark}

We define the Hirzebruch polynomial as
\begin{align*}
H_\mathcal{C}(2):=\frac{1}{2^{d-3}}(3e(Y)- c_1^2(Y)).
\end{align*}
Note that by equation \eqref{3e(Y)-c_1^2(Y)}, we have 
$$H_\mathcal{C}(2)=16-16g+d\left((2b-ae)(5a-2)+4a(g-1)\right)+9f_0-2f_1-4t_2.$$
By Theorem \ref{Theorem 4.5},
$ H_\mathcal{C}(2) \geq 0.$ If $Y$ is a
ball quotient then $H_{\mathcal{C}}(2)=0$.

We now check whether there exists a transversal arrangement
$\mathcal{C}$ on $X$  satisfying  Assumption \ref {extra-assumption}
such that the associated surface $Y$ is a ball quotient. 

As noted above, the relative proportionality of curves 
contained in the ramification divisor of $\sigma$ is zero.
There are two kinds of curves which are contained in
the ramification divisor of $\sigma$.  The first kind are the irreducible
components $F_p$ of $\sigma^{\star}E_p$ for $p \in \text{Sing}(\mathcal{C})$
with $r_p \ge 3$. Since $F_p^2 = -2^{r_p-2}$, \eqref{eq:2-2g} gives 
$\text{prop}(F_p) = 2^{r_{p}-2}(r_{p}-6)$.  

So, if the 
associated surface $Y$ is a ball quotient, then for any point $p \in \text{Sing}(\mathcal{C})$ with $r_p \ge
3$, we have $r_{p}= 6$. Hence the arrangement $\mathcal{C}$
satisfies $t_{k}=0$ for $k\ne 2,6.$


For any $C_i, C_j \in \mathcal{C}$, let $a': = C_i\cdot C_j =
2ab-a^2e$ and $b' := K_X \cdot C_i =
2ae+a(2g-2-e)-2b$. 

For any $j \in \{1,\ldots,d\}$, let $t_k^j$ denote the number 
of $k$-fold points of $C_j$. Since $t_k=0$ for $k \ne 2,6$, 
Lemma \ref{eq:combinatorial equality1}(1) gives
\begin{equation}\label{lin-eq1}
a'(d-1) = 5t_6^j+t_2^j. 
\end{equation}

The second kind of curves contained in
the ramification divisor of $\sigma$ are $D_j := \sigma^{\star}(C_j')$,
where $C_j'$  is the strict transform of 
$C_j$ under the blow up $\tau$. We now calculate the relative 
proportionality $\text{prop}(D_j)$. 


Note that $K_Y = \sigma^{\star}(T)$, where $T$ was defined in
Lemma \ref{Lemma 3.2}. We also recall that, by \eqref{4.2}, we have $T \cdot C_j' =
b'+\frac{a'}{2}+f_0^j-\frac{t_2^j}{2}$. Finally, note that $C_j'^2 = C_j^2 -\sum_{k\ge
  3} t_k^j = a'-\sum_{k\ge   3} t_k^j$.  

Then $\text{prop}(D_j) = 2D_j^2  - e(D_j) = 3D_j^2+K_Y\cdot D_j = 
3\left((\frac{2^{d-1}}{2^2}){C_j'}^2\right)+(\frac{2^{d-1}}{2})(T\cdot
C_j') = 2^{d-3}\left(3a'-3 \sum_{k\ge  3}
  t_k^j\right)+2^{d-3}\left(2b'+a'+2f_0^j-t_2^j\right) =
2^{d-3}\left(4a'+2b'-t_6^j+t_2^j\right)$.

For the final equality above,  we use the fact that $t_k = 0$ for $k
\ne 2, 6$. 
If $Y$ is a ball quotient, then $\text{prop}(D_j) = 0$. This gives
\begin{equation}\label{lin-eq2}
4a'+2b' = t_6^j-t_2^j. 
\end{equation}

Solving the linear equations \eqref{lin-eq1} and \eqref{lin-eq2} for
$t_2^j$ and $t_6^j$, and
using the easy combinatorial identity $\sum_{j=1}^d t_k^j = kt_k$, we
get 
\begin{equation}  \label{constraints}
t_{2} = \frac{a'd^{2}-21a'd-10b'd}{12} \text{ , } t_{6} = \frac{a'd^{2}+3a'd+2b'd}{36}.
\end{equation}

If there exists an arrangement $\mathcal{C}$ on $X$ satisfying
Assumption \ref {extra-assumption} and having only double and sixfold
points such that the associated surface $Y$ is a ball quotient, then 
 $H_\mathcal{C}(2)= 0$. This gives 
 \begin{equation}\label{eq:eq 5.2}
 16-16g+d[(2b-ae)(5a-2)+4a(g-1)]+t_{2}=3t_{6}.
 \end{equation}
  
Plugging the values of $t_2$ and $t_6$ obtained above 
in \eqref{eq:eq 5.2} and simplifying, we get 
  \begin{equation}\label{eq:eq 5.4}
  16-16g=-d\left((3a-1)(2b-ae)+2a(g-1)\right).
  \end{equation}
  We can rewrite \eqref{eq:eq 5.4} as 
  \begin{equation}\label{eq:eq 5.5}
  -16=d[(3a-1)(2b-ae)-2a]+(2ad-16)g.
  \end{equation}
  Thus by our assumptions, we have 
  \begin{align*}
  d[(3a-1)(2b-ae)-2a]&\ge d[(3a-1)ae-2a]=ad[e(3a-1)-2] \\&>0.
  \end{align*}
Note that $d \ge 4$ by Assumption \ref{star}. So if $a\geq2$ or if
$a=1, d\ge 8$, then $(2ad-16)g\geq0$ 
and thus the right-hand side of \eqref{eq:eq 5.5} is a positive number, a contradiction. 
  
Let $a=1$ and $4 \le d \le 7$. Then it is easy to directly check that
\eqref{eq:eq 5.2} is not possible.  First note that the largest value
of $t_6$ is attained when $t_k=0$ for $k\ne 6$ and in this case we
have $t_6 = \frac{a'd(d-1)}{30}$, by Lemma \ref{eq:combinatorial
  equality1}(2). 

If $Y$ is a ball quotient, then  \eqref{eq:eq 5.2}  holds and we have 
\begin{eqnarray*}
0 &=& 16-16g+d[(2b-ae)(5a-2)+4a(g-1)]+t_{2}-3t_{6}\\
& \ge& 16-16g+d(6b-3e+4g-4)-\frac{a'd(d-1)}{10}\\
&\ge& 16-16g+4gd-4d+(2b-e)\left(3d-\frac{d(d-1)}{10}\right)\\
&\ge& 16-4d+4 \left(3d-\frac{d(d-1)}{10}\right), ~~\text{~~~~since~} d\ge
      4, b \ge e \ge 4.
\end{eqnarray*}
Now it is easy to check that the last term above is positive for $4
\le d \le 7$, giving a contradiction.

The above arguments prove the following theorem. 
  \begin{theorem}\label{Theorem 5.2}
Let $X$ be a ruled surface with $e \ge 4$. 
There does not exist any transversal arrangement $\mathcal{C}$
on $X$ satisfying  Assumption \ref{extra-assumption} 
such that the associated surface $Y$ is a ball quotient.
  \end{theorem}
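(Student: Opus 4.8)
The plan is to argue by contradiction: assume that for some transversal arrangement $\mathcal{C}$ on $X$ satisfying Assumption \ref{extra-assumption} the associated surface $Y$ is a ball quotient, and derive numerical constraints that cannot all hold. The engine is the theory of constantly branched covers from \cite{BHH}: if $Y$ is a ball quotient obtained as a (good) branched cover $\sigma \colon Y \to \widetilde{X}$, then every irreducible curve $E$ contained in the ramification divisor of $\sigma$ has vanishing relative proportionality $\mathrm{prop}(E) := 2E^2 - e(E) = 0$. I would first identify the two families of curves in the ramification divisor of $\sigma$: the components $F_p$ lying over exceptional divisors $E_p$ of $\tau$ (for $p \in \mathrm{Sing}(\mathcal{C})$, $r_p \ge 3$), and the strict transforms $D_j = \sigma^{\star}(C_j')$ of the curves in the arrangement. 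For the first family, using $F_p^2 = -2^{r_p - 2}$ and the Hurwitz computation \eqref{eq:2-2g}, one gets $\mathrm{prop}(F_p) = 2^{r_p - 2}(r_p - 6)$, so that ball-quotient-hood forces $r_p = 6$ whenever $r_p \ge 3$; hence $t_k = 0$ for all $k \ne 2, 6$.

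Next I would compute $\mathrm{prop}(D_j)$. Since $Y$ is minimal of general type here, $\mathrm{prop}(D_j) = 2D_j^2 - e(D_j) = 3D_j^2 + K_Y \cdot D_j$. Using $K_Y = \sigma^{\star}T$ from Lemma \ref{Lemma 3.2}, the formula \eqref{4.2} for $T \cdot C_j'$, the degree of $\sigma$ along these curves, and ${C_j'}^2 = a' - \sum_{k \ge 3} t_k^j$ (with $a' := C_i \cdot C_j = 2ab - a^2 e$ and $b' := K_X \cdot C_i = 2ae + a(2g-2-e) - 2b$), and invoking $t_k = 0$ for $k \ne 2, 6$, one reduces $\mathrm{prop}(D_j) = 0$ to the linear relation $4a' + 2b' = t_6^j - t_2^j$. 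Combined with Lemma \ref{eq:combinatorial equality1}(1) restricted to $C_j$, which now reads $a'(d-1) = 5 t_6^j + t_2^j$, this is a $2 \times 2$ linear system in $t_2^j, t_6^j$; solving it and summing over $j$ via $\sum_j t_k^j = k t_k$ yields closed-form expressions for $t_2$ and $t_6$ in terms of $a', b', d$ as in \eqref{constraints}.

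Finally I would bring in the constraint that ball-quotient-hood also forces the Hirzebruch polynomial to vanish: $H_{\mathcal{C}}(2) = 0$, which by \eqref{3e(Y)-c_1^2(Y)} specializes (using $t_k = 0$ for $k \ne 2, 6$) to $16 - 16g + d[(2b - ae)(5a - 2) + 4a(g-1)] + t_2 = 3 t_6$. Substituting the formulas for $t_2$ and $t_6$ and simplifying should collapse this to a single Diophantine-type identity, which after rearranging becomes something like $-16 = d[(3a-1)(2b-ae) - 2a] + (2ad - 16)g$. Here I would estimate: using $b \ge ae$, $e \ge 4$, $a \ge 1$, the bracketed term $d[(3a-1)(2b-ae) - 2a] \ge ad[e(3a-1) - 2] > 0$, and when $a \ge 2$ or $a = 1, d \ge 8$ one also has $(2ad - 16)g \ge 0$, so the right side is positive — contradicting $-16$. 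The residual cases $a = 1$, $4 \le d \le 7$ I would dispatch by a direct check, bounding $t_6$ above by $\frac{a'd(d-1)}{30}$ (the extreme case $t_k = 0$ for $k \ne 6$, via Lemma \ref{eq:combinatorial equality1}(2)) and showing $16 - 16g + d[(2b-ae)(5a-2) + 4a(g-1)] + t_2 - 3t_6 > 0$ for each such $d$ using $b \ge e \ge 4$. The main obstacle is bookkeeping: correctly tracking the covering degrees ($2^{d-1}/4$ along $C_j'$, $2^{d-1}/2$ for the $K_Y$ intersection) in the $\mathrm{prop}(D_j)$ computation and then carrying the algebra through the linear solve and the final substitution without sign errors — the geometry is clean, but the identity \eqref{eq:eq 5.5} has to come out exactly right for the positivity estimate to bite.
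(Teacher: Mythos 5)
Your proposal is correct and follows essentially the same route as the paper's own argument: vanishing relative proportionality of the $F_p$ forces $t_k=0$ for $k\ne 2,6$, vanishing proportionality of the $D_j$ together with Lemma \ref{eq:combinatorial equality1}(1) yields the linear system for $t_2^j, t_6^j$, and substituting into $H_{\mathcal{C}}(2)=0$ produces the identity \eqref{eq:eq 5.5}, which is ruled out by positivity (with the residual cases $a=1$, $4\le d\le 7$ handled by the bound $t_6\le \frac{a'd(d-1)}{30}$). No substantive differences from the paper's proof.
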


{\bf Acknowledgement:} We thank Piotr Pokora for reading
  this paper and giving many useful suggestions. We also thank the
  referees for making several helpful suggestions which substantially
  improved the paper.

\bibliographystyle{plain}

\begin{thebibliography}{100}
\bibitem{Bar} W. Barth, C. Peters\ and\ A. Van de Ven, {\it Compact complex surfaces}, Ergebnisse der Mathematik und ihrer Grenzgebiete (3), 4, Springer-Verlag, Berlin, 1984. 
\bibitem{BHH} G. Barthel, F. Hirzebruch\ and\ T. H\"{o}fer, {\it Geradenkonfigurationen und Algebraische Fl\"{a}chen}, Aspects of Mathematics, D4, Friedr. Vieweg \& Sohn, Braunschweig, 1987. 
\bibitem{B1} 
T. Bauer, S. Di Rocco, B. Harbourne, J. Huizenga, A. Lundman,
P. Pokora\ and\ T. Szemberg, 
Bounded negativity and arrangements of lines, Int. Math. Res. Not. IMRN {\bf 2015}, no.~19, 9456--9471. 
\bibitem{B2} T. Bauer, B. Harbourne,  A. Knutsen, 
A. K\"{u}ronya, S. M\"{u}ller-Stach, X. Roulleau\ and\ T.  Szemberg, 
Negative curves on algebraic surfaces, Duke Math. J. {\bf 162} (2013), no.~10, 1877--1894. 
\bibitem{DHS} M. Dumnicki, D. Harrer\ and\ J. Szpond, On absolute linear Harbourne constants, Finite Fields Appl. {\bf 51} (2018), 371--387.
\bibitem{E} S. Eterovi\'{c}, {\it Logarithmic Chern slopes of
    arrangements of rational sections in Hirzebruch surfaces},
Master Thesis, Pontificia Universidad Cat\'{o}lica de Chile, Santiago 2015.
\bibitem{Har} R. Hartshorne, {\it Algebraic geometry},
  Springer-Verlag, New York, 1977.
\bibitem{He} J. C. Hemperly, The
  parabolic contribution to the number of linearly independent
  automorphic forms on a certain bounded domain, Amer. J. Math. {\bf
    94} (1972), 1078--1100.
\bibitem{Hi} E. Hironaka, Abelian coverings of the complex projective plane branched along configurations of real lines, Mem. Amer. Math. Soc. {\bf 105} (1993), no.~502, {\rm vi}+85 pp.
\bibitem{H1} F. Hirzebruch, Arrangements of lines and algebraic
  surfaces, in {\it Arithmetic and geometry, Vol. II}, Progr. Math.,
  36, Birkh\"{a}user, Boston, Mass, 113--140 (1983).  
\bibitem{H3} F. Hirzebruch, Algebraic surfaces with extremal Chern numbers (based on a dissertation by T. H\"{o}fer, Bonn, 1984), Uspekhi Mat. Nauk {\bf 40} (1985), no.~4(244), 121--129. 
\bibitem{H2} F. Hirzebruch, Singularities of algebraic surfaces and
  characteristic numbers, in {\it The Lefschetz centennial conference,
    Part I (Mexico City, 1984)}, 141--155, Contemp. Math., 58,
  Amer. Math. Soc., Providence, RI. 
\bibitem{K} F. Klein, Ueber die Transformation siebenter Ordnung der
  elliptischen Functionen, Math. Ann. {\bf 14} (1878), no.~3, 428--471. 
\bibitem{LP1}R. Laface\ and\ P. Pokora, On the local negativity of surfaces with numerically trivial canonical class, Atti Accad. Naz. Lincei Rend. Lincei Mat. Appl. {\bf 29} (2018), no.~2, 237--253. 
\bibitem{LP2} R. Laface\ and\ P. Pokora, Local negativity of
    surfaces with non-negative Koidara dimension and transversal
    configurations of curves, To appear in Glasg. Math. J., arXiv:1602.05418. 
  \bibitem{N} M. Namba, {\it Branched coverings and algebraic functions}, Pitman Research Notes in Mathematics Series, 161, Longman Scientific \& Technical, Harlow, 1987.
\bibitem{Pa} R. Pardini, Abelian covers of algebraic varieties, J. Reine Angew. Math. {\bf 417} (1991), 191--213. MR1103912
\bibitem{P} P. Pokora, Harbourne constants and arrangements of lines
  on smooth hypersurfaces in $\Bbb P^3_{\Bbb C}$, Taiwanese
  J. Math. {\bf 20} (2016), no.~1, 25--31.
\bibitem{P1} P. Pokora, Hirzebruch-type inequalities and plane curve
  configurations, Internat. J. Math. {\bf 28} (2017), no.~2, 1750013,
  11 pp. 
\bibitem{P2} P. Pokora, Hirzebruch-Kummer covers of algebraic
  surfaces, Turkish J. Math. {\bf 43}, 412--421 (2019).
\bibitem{PRS} P. Pokora, X. Roulleau\ and\ T. Szemberg, Bounded negativity, Harbourne constants and transversal arrangements of curves, Ann. Inst. Fourier (Grenoble) {\bf 67} (2017), no.~6, 2719--2735.
\bibitem{PT} P. Pokora\ and\ H. Tutaj-Gasi\'{n}ska, Harbourne constants and conic configurations on the projective plane, Math. Nachr. {\bf 289} (2016), no.~7, 888--894. 
\bibitem{R} X. Roulleau, Bounded negativity, Miyaoka-Sakai inequality, and elliptic curve configurations, Int. Math. Res. Not. IMRN {\bf 2017}, no.~8, 2480--2496.
\bibitem{T} P. Tretkoff, {\it Complex ball quotients and line
    arrangements in the projective plane}, Mathematical Notes, 51,
  Princeton University Press, Princeton, NJ, 2016. 
\bibitem{W} A. Wiman, Zur Theorie der endlichen Gruppen von birationalen Transformationen in der Ebene, Math. Ann. {\bf 48} (1896), no.~1-2, 195--240. 


\end{thebibliography}

\end{document}